\newtheorem{thm}{Theorem}
\newtheorem{lem}[thm]{Lemma}
\newtheorem{cor}[thm]{Corollary}
\newtheorem{prop}[thm]{Proposition}
\theoremstyle{definition}
\newtheorem{deff}[thm]{Definition}
\newtheorem{example}[thm]{Example}
\theoremstyle{remark}
\newtheorem{rem}[thm]{Remark}
\newcommand{\refeq}[1]{\textup{(\ref{eq:#1})}}
\newcommand{\refthm}[1]{Theorem~\ref{thm:#1}}
\newcommand{\reflem}[1]{Lemma~\ref{lem:#1}}
\newcommand{\refcor}[1]{Corollary~\ref{cor:#1}}
\newcommand{\refprop}[1]{Proposition~\ref{prop:#1}}
\newcommand{\refsec}[1]{Section~\ref{sec:#1}}
\newcommand{\refexa}[1]{Example~\ref{exa:#1}}
\numberwithin{equation}{section}
\numberwithin{thm}{section}
\renewcommand{\theenumi}{\roman{enumi}}
\renewcommand{\labelenumi}{\textup{(\theenumi)}}
{\catcode`p =12 \catcode`t =12 \gdef\eeaa#1pt{#1}}      % Get slantfactor
\def\accentadjtext#1{\setbox0\hbox{$#1$}\kern   % Convert it with height
                \expandafter\eeaa\the\fontdimen1\textfont1 \ht0 }
\def\accentadjscript#1{\setbox0\hbox{$#1$}\kern % Convert it with height
                \expandafter\eeaa\the\fontdimen1\scriptfont1 \ht0 }
\def\accentadjscriptscript#1{\setbox0\hbox{$#1$}\kern   % Convert it with height
                \expandafter\eeaa\the\fontdimen1\scriptscriptfont1 \ht0 }
\def\accentadjtextback#1{\setbox0\hbox{$#1$}\kern       % Convert it with height
                -\expandafter\eeaa\the\fontdimen1\textfont1 \ht0 }
\def\accentadjscriptback#1{\setbox0\hbox{$#1$}\kern     % Convert it with height
                -\expandafter\eeaa\the\fontdimen1\scriptfont1 \ht0 }
\def\accentadjscriptscriptback#1{\setbox0\hbox{$#1$}\kern % Convert it with height
                -\expandafter\eeaa\the\fontdimen1\scriptscriptfont1 \ht0 }
\def\itoverline#1{{\mathsurround0pt\mathchoice
        {\rlap{$\accentadjtext{\displaystyle #1}
                \accentadjtext{\vrule height1.593pt}
                \overline{\phantom{\displaystyle #1}
                \accentadjtextback{\displaystyle #1}}$}{#1}}
        {\rlap{$\accentadjtext{\textstyle #1}
                \accentadjtext{\vrule height1.593pt}
                \overline{\phantom{\textstyle #1}
                \accentadjtextback{\textstyle #1}}$}{#1}}
        {\rlap{$\accentadjscript{\scriptstyle #1}
                \accentadjscript{\vrule height1.593pt}
                \overline{\phantom{\scriptstyle #1}
                \accentadjscriptback{\scriptstyle #1}}$}{#1}}
        {\rlap{$\accentadjscriptscript{\scriptscriptstyle #1}
                \accentadjscriptscript{\vrule height1.593pt}
                \overline{\phantom{\scriptscriptstyle #1}
                \accentadjscriptscriptback{\scriptscriptstyle #1}}$}{#1}}}}
\newcommand{\art}[6]{{\rm #1, \emph{#2}, #3\/ \bf #4 \rm (#5), \mbox{#6}.}}
\newcommand{\auth}[2]{{#2. #1}}
\newcommand{\AND}{{\rm and }}
\newcommand{\qtext}{\quad\text}
\newcommand{\sm}{\setminus}
\newcommand{\ve}{\varepsilon}
\newcommand{\bd}{\partial}         % boundary
\newcommand{\R}{{\mathbb R}}
\newcommand{\Z}{{\mathbb Z}}
\DeclareMathOperator{\dist}{dist}
\DeclareMathOperator{\diam}{diam}
\DeclareMathOperator{\BMOA}{BMOA}
\DeclareMathOperator*{\essliminf}{ess\,lim\,inf}
\DeclareMathOperator*{\essinf}{ess\,inf}
\newcommand{\increase}{\uparrow}
\newcommand{\union}{\bigcup}
\newcommand{\const}{C}
\newcommand{\di}{n}			    %% dimension is \di
\newcommand{\Rd}{{\R^\di}}		%% Eulcidean space
\newcommand{\Dom}{\Omega}		    %% another domain
\newcommand{\din}{d_{\textup{in}}}% inner metric
\newcommand{\Bin}{B_{\textup{in}}}% inner ball
\newcommand{\BX}{B_X}% ball in X
\newcommand{\densname}{{\mathcal{D}}}
\newcommand{\densCpname}{{\mathcal{D}}^{\Cp}}
\newcommand{\densinname}{{\mathcal{D}}_{\textup{in}}}
\newcommand{\dens}[3]{\densname(#1,#2,#3)} % lower density
\newcommand{\densCp}[2]{\densCpname(#1,#2)} % lower density
\newcommand{\densin}[3]{\densinname(#1,#2,#3)} % lower density
\newcommand{\dconst}{\tau}% dilation constant
\DeclareMathOperator{\capa}{cap}  % changed for consistency 
\newcommand{\capap}{\capa_{p}}
\newcommand{\capapRn}{\capa_{p}^{\R^n}}
\newcommand{\capapED}[2]{\capap(#1,#2)}
\newcommand{\capapEDRn}[2]{\capapRn(#1,#2)}
\newcommand{\UU}{U^*}       %% double U
\newcommand{\Jclass}{\mathcal{J}}
\newcommand{\disof}[1]{\delta_{#1}}    %% distance in \dom
\newcommand{\disU}{\disof{U}}    %% distance in U
\def\vint{\mathop{\mathchoice%
          {\setbox0\hbox{$\displaystyle\intop$}\kern 0.22\wd0%
           \vcenter{\hrule width 0.6\wd0}\kern -0.82\wd0}%
          {\setbox0\hbox{$\textstyle\intop$}\kern 0.2\wd0%
           \vcenter{\hrule width 0.6\wd0}\kern -0.8\wd0}%
          {\setbox0\hbox{$\scriptstyle\intop$}\kern 0.2\wd0%
           \vcenter{\hrule width 0.6\wd0}\kern -0.8\wd0}%
          {\setbox0\hbox{$\scriptscriptstyle\intop$}\kern 0.2\wd0%
           \vcenter{\hrule width 0.6\wd0}\kern -0.8\wd0}}%
          \mathopen{}\int}
\newcommand{\Om}{\Omega}
\newcommand{\om}{\omega}
\renewcommand{\phi}{\varphi}
\newcommand{\al}{\alpha}
\newcommand{\CC}{{\mathfrak{U}}}
\newcommand{\be}{\beta}
\newcommand{\eps}{\varepsilon}
\newcommand{\La}{\Lambda}
\newcommand{\de}{\delta}
\newcommand{\Ga}{\Gamma}
\newcommand{\ga}{\gamma}
\newcommand{\gat}{\tilde{\gamma}}
\newcommand{\ka}{\kappa}
\newcommand{\setm}{\setminus}
\newcommand{\DCC}{\mathcal{D}^\CC}
\newcommand{\pip}{\varphi}
\newcommand{\p}{{$p\mspace{1mu}$}}
\newcommand{\Np}{N^{1,p}}
\newcommand{\distin}{\dist_{\textup{in}}}% inner metric distance
\newcommand{\bdry}{\partial}
\DeclareMathOperator{\capp}{cap}
\newcommand{\cp}{\capp_p}
\newcommand{\eqv}{\mathchoice{\quad \Longleftrightarrow \quad}
    {\Leftrightarrow}{\Leftrightarrow}{\Leftrightarrow}}
\newcommand{\Cp}{{C_p}}
\newcommand{\imp}{\mathchoice{\quad \Longrightarrow \quad}{\Rightarrow}
                {\Rightarrow}{\Rightarrow}}
\newcounter{saveenumi}
\begin{document}

\title{Dichotomy of global capacity density in metric measure spaces}

\author{Hiroaki Aikawa}
\address{
Department of Mathematics,
Hokkaido University,
Sapporo 060-0810, Japan
}
\email{aik@math.sci.hokudai.ac.jp}

\author{Anders Bj\"orn}
\address{Department of Mathematics,
Link\"oping University,
SE-581 83 Link\"oping, Sweden}
\email{anders.bjorn@liu.se}

\author{Jana Bj\"orn}
\address{Department of Mathematics,
Link\"oping University,
SE-581 83 Link\"oping, Sweden}
\email{jana.bjorn@liu.se}

\author{Nageswari Shanmugalingam}
\address{
Department of Mathematical Sciences,
P.O. Box 210025,
University of Cincinnati,
Cincinnati, OH 45221-0025, U.S.A.
}
\email{shanmun@uc.edu}

\subjclass[2010]{31C15, 31C45, 31E05}
\keywords{capacitarily stable collection,
capacitary potential,
capacity density, dichotomy, metric 
space, Sobolev capacity, variational capacity}

\thanks{The first 
author was partially supported  by 
JSPS KAKENHI grants 25287015 and 25610017.
The second and third authors were partially supported by the  
Swedish Research Council.
The last author was partially supported by NSF grants DMS-1200915
and DMS-1500440. Part of this research was conducted during the last author's visit to
Link\"oping University; she wishes to thank that institution for its kind hospitality.}

\begin{abstract}
The variational capacity $\capap$ in Euclidean spaces is known to enjoy the
density dichotomy at large scales, namely that for every $E\subset\R^n$,
\[
\inf_{x\in\R^n}\frac{\capap(E\cap B(x,r),B(x,2r))}{\capap(B(x,r),B(x,2r))}
\]
is either zero or tends to $1$ as $r \to \infty$.
We prove that this property still holds in unbounded
complete geodesic metric spaces equipped
with a doubling  measure supporting a \p-Poincar\'e inequality,
but that it can fail in nongeodesic metric spaces and also for the Sobolev
capacity in~$\R^n$.

It turns out that
the shape of balls impacts the validity of the density dichotomy. 
Even in more general metric spaces, we construct families of sets, such as
John domains, for which the density dichotomy holds.
Our arguments include an exact formula for the variational capacity of 
superlevel sets for capacitary potentials
and a quantitative approximation from inside of 
the variational capacity.
\end{abstract}

\maketitle

\section{Introduction}

In extending a result of Hayman and Pommerenke~\cite{HaPo-BMOA} and
giving a characterization of analytic functions mapping the unit disk into a given planar domain $\Om$,
Stegenga~\cite{Steg} came across a dichotomy property of the logarithmic capacity, namely that if $E\subset\R^2$
is the complement of a planar domain, then its logarithmic capacity density with respect to a radius $r>0$ either
tends to $0$ or to $1$ as $r\to\infty$. The property that the complement of $\Om$ has 
its logarithmic capacity density tending to
$1$ at global scales 
characterizes the property that analytic functions from the unit disk to $\Om$ belong to the class $\BMOA$.

In  \cite{MR3411153} the first author, together with Itoh,
 studied such a dichotomy property of the
global capacity density 
for the variational \p-capacity, $1<p<\infty$, in 
weighted Euclidean spaces.
In this note we investigate the same problem in the
nonsmooth setting of metric measure spaces, where it
is considerably more complicated and subtle.
It turns out that the dichotomy fails in general, and that
the shape of balls plays a significant role.

Fix $1<p<\infty$ and let 
$(X,d,\mu)$ be an unbounded complete metric measure space 
with  a doubling measure $\mu$ supporting a \p-Poincar\'e inequality.
It is known that such a metric space is $L$-quasiconvex for some $L\ge1$, i.e., 
for all $x,y\in X$, there exists a rectifiable curve $\gamma$
connecting $x$ and $y$ with length 
$\ell(\gamma) \le L d(x,y)$.
(See Section~\ref{sect-prelim} for this and other
facts mentioned in this introduction.) 
Define the \emph{inner metric} $\din$ by
\begin{equation} \label{eq:din}
\din(x,y)=\inf_\gamma \ell(\gamma),
\end{equation}
where the infimum is taken over all rectifiable curves $\gamma$ 
connecting $x$ and $y$.
It follows from the $L$-quasiconvexity that
$d(x,y)\le \din(x,y)\le Ld(x,y)$.
Moreover, arc length with respect to the given distance
$d$ and with respect to the inner metric $\din$ are the
same, and
thus $X$ is a geodesic space (i.e., $1$-quasiconvex) 
with respect to $\din$.

Now let $E\subset X$ and $\dconst>1$.
We study the following global lower capacity densities
\begin{align*}
\dens{r}{\dconst}{E}
&=\inf_{x\in X}
\frac{\capapED{E\cap B(x,r)}{B(x,\dconst r)}}
{\capapED{B(x,r)}{B(x,\dconst r)}},   \nonumber\\
\densin{r}{\dconst}{E}
&=\inf_{x\in X}
\frac{\capapED{E\cap\Bin(x,r)}{\Bin(x,\dconst r)}}{\capapED{\Bin(x,r)}
{\Bin(x,\dconst r)}}.  
\end{align*}
Here $B(x,r)=\{y\in X:d(x,y)<r\}$ and $\Bin(x,r)=\{y\in X:\din(x,y)<r\}$ 
denote the ordinary and inner balls, respectively,
and $\cp$ is the variational capacity (see \eqref{eq:varcap}).

It is easy to see that, as $r\to\infty$,
the limit of $\dens{r}{\dconst}{E}$ and that of $\densin{r}{\dconst}{E}$ are comparable
(see \reflem{dens=densin}).
However, they have different nature.
We show that $\densin{r}{\dconst}{E}$ has the same dichotomy as in the Euclidean case
found in~\cite[Corollary~1.5]{MR3411153},
whereas $\dens{r}{\dconst}{E}$ does not have such a dichotomy in general.
More precisely, we have the following two theorems.

\begin{thm}\label{thm:main}
For every $E\subset X$ one of the following statements holds:
\begin{enumerate}
\renewcommand{\theenumi}{\textup{(\roman{enumi})}}%
\renewcommand{\labelenumi}{\theenumi}%
\item 
\label{ref-i}
$\lim_{R\to\infty}\densin{R}{\dconst}{E}=0$, 

\item $\lim_{R\to\infty}\densin{R}{\dconst}{E}=1$.
\end{enumerate}
Furthermore, the two possibilities listed above are independent of 
$\dconst>1$, and
\ref{ref-i} holds if and only if any of the following
equivalent conditions is satisfied:        
\begin{enumerate}
\renewcommand{\theenumi}{\textup{(\alph{enumi})}}%
\renewcommand{\labelenumi}{\theenumi}%
\item 
$\lim_{R\to\infty}\dens{R}{\dconst}{E}=0$, 
\item 
         $\dens{r}{\dconst}{E}=0$ for all $r>0$,
\item 
         $\densin{r}{\dconst}{E}=0$ for all $r>0$.
\end{enumerate}
\end{thm}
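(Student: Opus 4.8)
The plan is to establish the dichotomy by first reducing everything to the inner metric, where the space is genuinely geodesic, and then exploiting the self-improvement inherent in capacitary estimates. First I would prove the equivalence of the three conditions (a), (b), (c) together with statement \ref{ref-i}. The implications involving $\densname$ and $\densinname$ should follow from \reflem{dens=densin}, which says the limits are comparable; so (a) $\Leftrightarrow$ \ref{ref-i}. The genuinely local-to-global statements (b) and (c) require a scaling/translation argument: because the space is doubling and supports a \p-Poincar\'e inequality, the relative capacities $\capapED{B(x,r)}{B(x,\dconst r)}$ and $\capapED{\Bin(x,r)}{\Bin(x,\dconst r)}$ are mutually comparable across all $x$ and all $r$ (quasi-additivity and the standard capacity estimates for annuli). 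Using this, if the density is zero at one scale $r_0$, one covers a ball $B(x,R)$ at a larger scale by boundedly many balls of radius $r_0$ and concludes the density is small at scale $R$ as well; letting $R\to\infty$ gives \ref{ref-i}. Conversely, if \ref{ref-i} holds, a compactness/subadditivity argument forces the density to vanish at every fixed scale.

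The heart of the matter is the genuine dichotomy: if $\lim_{R\to\infty}\densinname(R,\dconst,E)$ is not zero, it must equal $1$. Here I would use the inner (geodesic) structure crucially. Suppose $\densinname(R_j,\dconst,E)\ge \eps_0>0$ along a sequence $R_j\to\infty$. The aim is to show that the complement $X\setminus E$ becomes capacitarily negligible inside large inner balls. The key tool is the exact formula for the variational capacity of superlevel sets of capacitary potentials, advertised in the abstract, combined with the quantitative inner approximation. Concretely, let $u$ be the capacitary potential of $E\cap\Bin(x,R)$ in $\Bin(x,\dconst R)$. The superlevel sets $\{u>t\}$ have variational capacity (in $\Bin(x,\dconst R)$) given by an explicit decreasing function of $t$ built from the capacity of $E\cap\Bin(x,R)$ itself; feeding in the lower bound $\capapED{E\cap\Bin(x,R)}{\Bin(x,\dconst R)}\ge\eps_0\capapED{\Bin(x,R)}{\Bin(x,\dconst R)}$ and iterating this estimate at the doubled scale $\dconst R$ (using that geodesic balls of comparable radius are nested with controlled capacity) yields that the density at scale $\dconst R$ is bounded below by $\Phi(\eps_0)$ for some function $\Phi$ with $\Phi(\eps)>\eps$ whenever $0<\eps<1$. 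Since $\dconst R\to\infty$ through the same kind of sequence, the liminf is a fixed point of $\Phi$ in $(0,1]$, hence equals $1$.

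The $\dconst$-independence then follows because changing $\dconst$ only alters all the relative capacities by multiplicative constants depending on $\dconst$, $p$, and the doubling and Poincar\'e data — not on $E$, $x$, or $r$ — so the property of the limit being $0$ is unaffected; I would record this as a short corollary of the comparison estimates used above.

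**Main obstacle.** I expect the delicate point to be the self-improvement step producing $\Phi(\eps)>\eps$. In the Euclidean weighted setting of \cite{MR3411153} one has homogeneity and rotational symmetry to run this cleanly; in a general doubling \p-Poincar\'e space one only has geodesics (with respect to $\din$) and quasi-additivity of capacity, so the superlevel-set formula and the inner approximation must be strong enough to absorb the loss of symmetry. Getting the iteration to close — i.e., ensuring the gain constant does not degrade to $1$ as $\eps\to1^-$ or blow up the number of iterations needed — is where the real work lies, and it is presumably why the paper isolates the exact capacity formula for superlevel sets and the quantitative inner approximation as its main technical lemmas.
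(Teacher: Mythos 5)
Your proposal correctly identifies the two key technical ingredients (the exact superlevel-set capacity formula, \refprop{capapEM}, and the quantitative approximation of capacity from inside, \reflem{approx.in}), and it correctly reduces matters to the inner metric using \reflem{dens=densin}. But the synthesis — the actual dichotomy argument — has two genuine gaps.

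First, the covering argument you sketch for the equivalences does not work. You claim that if the density is zero at one fixed scale $r_0$ then one can cover $B(x,R)$ by boundedly many balls of radius $r_0$ and conclude the density is small at scale $R$. But $\densname(r_0,\dconst,E)=0$ is a statement about the \emph{infimum} over base points: it says that \emph{some} ball $B(x,r_0)$ carries low relative capacity of $E$, not that all of them do. In a cover of $B(x,R)$ by $r_0$-balls, most of those small balls may carry high relative capacity of $E$, and then so does $B(x,R)$. There is no subadditivity mechanism that transfers a bad point at scale $r_0$ to an arbitrary point at scale $R$. In fact the direction (b)$\Rightarrow$(a), (c)$\Rightarrow$(i) is trivial (if $\densname$ resp.\ $\densinname$ is identically $0$, its limit is $0$); the nontrivial content is the converse, and that is precisely the dichotomy itself, so it cannot be dispatched by a covering argument beforehand.

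Second, the self-improvement/fixed-point scheme you propose — ``density $\ge\ve$ at scale $R$ implies density $\ge\Phi(\ve)$ at scale $\dconst R$ with $\Phi(\ve)>\ve$, hence the liminf is a fixed point of $\Phi$ and so equals $1$'' — is not what the paper does, and you do not construct such a $\Phi$; you explicitly flag this as the main obstacle. The paper's iteration does \emph{not} run across nested scales $R,\dconst R,\dconst^2R,\ldots$. Instead it iterates the boundary estimate (\reflem{uE>} via the comparison principle, \reflem{comp-princ-cap-pot}) \emph{inward within one fixed large inner ball}, stripping off concentric layers of fixed width $\Lambda r$ (\reflem{hmp<onUk} and \refcor{C/C>}). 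This produces the lower bound $\cp(E\cap U,\UU)/\cp(U_{k\Lambda r},\UU)\ge(1-(1-C_0\eta^{1/(p-1)})^k)^{p-1}\ge\alpha$ for a fixed $k$ chosen to hit any target $\alpha<1$. The second ingredient, \reflem{approx.in} combined with the inner corkscrew property of $\Bin(x,R)$ (Remark~\ref{rem-Bin-corkscrew}), then shows $\cp(U_{k\Lambda r},\UU)/\cp(U,\UU)\to1$ as $R\to\infty$ with $k$ held fixed. Multiplying the two ratios gives $\densinname(R,\tau,E)\ge\alpha(1-\eta_{\kappa/2}^j)^{p-1}\to\alpha$, and letting $\alpha\to1$ finishes. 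No fixed-point argument or continuity of a gain function is required; the reason the gain does not degrade is that the number of inward iterations available grows like $\log R$ while the target $\alpha$ is fixed in advance. Your instinct that these are the ``right'' lemmas is sound, but the route by which the paper combines them is materially different and avoids exactly the closing problem you anticipated.
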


\begin{thm}\label{thm:nodicho}
There exists a  complete
unbounded metric measure space $(X,d,\mu)$
supporting a $1$-Poincar\'e inequality with $\mu$ doubling
and $E\subset X$  such that
\[
0<\liminf_{R\to\infty}\dens{R}{\dconst}{E}<1
\quad \text{for all } \tau >1.
\]
\end{thm}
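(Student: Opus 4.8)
The plan is to reduce \refthm{nodicho}, via \refthm{main}, to two essentially independent tasks. By \refthm{main} together with \reflem{dens=densin}, as soon as $E$ does \emph{not} satisfy condition~(b) of \refthm{main} --- i.e.\ $\dens{r_0}{\dconst}{E}>0$ for one single scale $r_0>0$ --- we are in alternative~(ii) for the inner density, so $\lim_{R\to\infty}\densin{R}{\dconst}{E}=1$ and hence, by the comparability in \reflem{dens=densin}, $\liminf_{R\to\infty}\dens{R}{\dconst}{E}>0$. Thus the whole content of \refthm{nodicho} is to build a complete, unbounded, doubling, $1$-Poincar\'e space $(X,d,\mu)$ and a set $E$ which is quantitatively thick at one small scale $r_0$ (forcing $\liminf_R\dens{R}{\dconst}{E}>0$), yet for which $\dens{R_k}{\dconst}{E}\le 1-c$ along some sequence $R_k\to\infty$ with $c=c(\dconst)>0$; this last inequality gives $\liminf_R\dens{R}{\dconst}{E}\le 1-c<1$. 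Since $X$ and $E$ are constructed \emph{without reference to} $\dconst$, running this for each $\dconst>1$ separately yields the conclusion for all $\dconst>1$.

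The space must be genuinely non-geodesic: if $d=\din$, then \reflem{dens=densin} would propagate the inner dichotomy to $\dens{}{}{}$, so the failure has to come from anomalies in the \emph{shape} of the outer balls that persist at arbitrarily large scales. The device I would use is to glue onto a geodesic model (say $\Rd$, or a union of Euclidean half-spaces) a sequence of ``detour'' pieces $D_k$, of inner size $\asymp R_k$ (with $R_k=2^k$, placed near points $\xi_k\to\infty$ far enough apart that distinct $D_k$ do not interact), modelled on the wedge/book phenomenon: two sheets whose inner distance is a fixed factor $L>1$ times their distance in $d$. The attachments are made uniformly, so that the glued $X$ stays doubling and retains a $1$-Poincar\'e inequality (via the gluing theorems for uniform unions), and $R_k$ is calibrated to $D_k$ so that $B(\xi_k,R_k)$ sees all of $D_k$ while $\Bin(\xi_k,R_k)$ sees only a fixed proportion of it, the ``slack'' $A_k:=D_k\setminus\Bin(\xi_k,R_k)$ lying at inner distance between $R_k$ and $LR_k$ from $\xi_k$. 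Because $\din\le L\,d$ everywhere, each $A_k$ is, at the small scale $r_0$, only a minor component of every $r_0$-ball that meets it, which is precisely what lets $E$ delete it without losing small-scale thickness.

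Now take $E=X\setminus\bigcup_kA_k$, and check two things. First, condition~(b) of \refthm{main} fails: for every $x$ one has $\cp(E\cap B(x,r_0),B(x,\dconst r_0))\ge\delta_0\,\cp(B(x,r_0),B(x,\dconst r_0))$ --- trivially when $B(x,r_0)$ avoids all $A_k$, and otherwise from the minor-component property together with the standard interior capacity estimates; so $\liminf_R\dens{R}{\dconst}{E}>0$ by the first paragraph. Second, the capacity at the anomalous scales: since $E\cap B(\xi_k,R_k)$ omits $A_k$, its capacity inside $B(\xi_k,\dconst R_k)$ equals that of $B(\xi_k,R_k)\setminus A_k$; but $A_k$ is exactly the geodesically inefficient part of the ball and lies on the way out to $\partial B(\xi_k,\dconst R_k)$, so the capacitary potential of $B(\xi_k,R_k)\setminus A_k$ must drop across a condenser that is a definite factor longer --- gap $\sim(\dconst-1)R_k$ in place of $\sim(\dconst-L)R_k$ in the wedge model once $\dconst>L$ (a separate elementary estimate when $1<\dconst\le L$). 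This yields $\cp(E\cap B(\xi_k,R_k),B(\xi_k,\dconst R_k))\le(1-c)\,\cp(B(\xi_k,R_k),B(\xi_k,\dconst R_k))$ with, for instance, $c\sim 1-\bigl((\dconst-L)/(\dconst-1)\bigr)^{p-1}>0$ independent of $k$, so $\dens{R_k}{\dconst}{E}\le 1-c$.

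The hard part is the construction, because the two requirements pull against each other. The $1$-Poincar\'e inequality forces $\din\le L\,d$, so only the mild, factor-$L$ kind of non-geodesicity is available --- no ``deep folds'' --- yet $A_k$ must carry a \emph{fixed} proportion of $\cp(B(\xi_k,R_k),B(\xi_k,\dconst R_k))$ while being thin enough (in the $r_0$-sense) that deleting it does not destroy the small-scale density, and at the same time $D_k$ must be attached to the model uniformly enough that the gluing theorems still deliver the Poincar\'e inequality. Designing a single gadget $D_k$ meeting all three demands is the crux; the wedge/book geometry and its explicit condenser computation are the model to imitate, and verifying the $1$-Poincar\'e inequality for the resulting glued space is the main technical hurdle.
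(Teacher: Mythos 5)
Your reduction via \refthm{main} and \reflem{dens=densin} --- proving $\dens{r_0}{\dconst}{E}>0$ at one small scale to force $\liminf_R\dens{R}{\dconst}{E}>0$, and exhibiting a sequence $R_k\to\infty$ with $\dens{R_k}{\dconst}{E}\le 1-c$ to force $\liminf_R\dens{R}{\dconst}{E}<1$ --- is exactly the paper's reduction, and your diagnosis that the failure must come from persistent shape anomalies of $d$-balls at large scales (not of $\din$-balls) is correct. However, the construction is the whole content of the theorem, and your proposal leaves it as a wish list: you propose gluing ``detour pieces'' $D_k$ onto a geodesic base in the manner of a wedge/book, and you correctly flag that (a) designing a single gadget $D_k$ that simultaneously carries a fixed proportion of the condenser capacity while being thin at scale $r_0$, and (b) verifying the $1$-Poincar\'e inequality for the glued space, are both open. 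These are not routine details but the crux, and the proposal does not show they can be resolved simultaneously. In addition, your condenser estimate $c\sim 1-\bigl((\dconst-L)/(\dconst-1)\bigr)^{p-1}$ only applies for $\dconst>L$, and the promised ``separate elementary estimate'' for $1<\dconst\le L$ is not supplied, so even the defect at large scales is incomplete.

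The paper sidesteps both hurdles with a concrete construction that avoids gluing entirely: take $X=\R^n\setminus\bigcup_{j\ge1}B^+(x_j,R_j)$ with $x_j=(4^j,0,\dots,0)$ and $R_j=2^j$, where $B^+$ denotes the open upper half-ball, equipped with the restricted Euclidean metric and Lebesgue measure. This $X$ is the closure of a uniform domain, so it is doubling and supports a $1$-Poincar\'e inequality directly by known extension results --- no gluing theorem is needed. For $E$ the paper takes a union of small balls $B(z,\delta)$ centered at lattice points $z\in\Z^n$ lying outside a fixed-width neighborhood of the removed half-balls; the lattice structure gives $\dens{2\sqrt n}{\dconst}{E}>0$ at once, and at the anomalous scales $R_j+\delta$ the set $E\cap B_X(x_j,R_j+\delta)$ is contained in the Euclidean lower half-ball $B^-(x_j,R_j+\delta)$, so by translation and dilation the density ratio is bounded above by $\capapRn(B^-(0,1),B(0,\dconst))/\capapRn(B(0,1),B(0,\dconst))<1$, uniformly in $j$ and for every $\dconst>1$. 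This comparison to a fixed Euclidean half-ball condenser is cleaner and avoids the case split on $\dconst$ versus $L$ altogether.
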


The above counterexample to the dichotomy arises from the lack of geodesics
with respect to the ordinary metric.
Although by the quasiconvexity of $X$, an ordinary ball $B(x,r)$ 
and an inner ball $\Bin(x,r)$ satisfy
\begin{equation}\label{eq:BinB}
\Bin(x,r)\subset B(x,r)\subset \Bin(x,Lr),
\end{equation}
and thus are comparable,
the ordinary balls may be oddly shaped.
This illustrates the difference between 
Theorems~\ref{thm:main} and~\ref{thm:nodicho}.
As was observed in \cite{MR3411153}, 
uniform approximation of capacity from inside
plays an important role for the dichotomy of the global capacity density.
Such an approximation property can be verified for
domains satisfying an interior corkscrew condition, see
\refsec{ACI} for details.
To further understand this phenomenon  we introduce the notion of capacitarily
stable collections of sets in Section~\ref{sect-cap-stable} and show
that the dichotomy holds for such collections. We also give examples
of capacitarily stable collections, including one consisting of John domains.

Even though there is no dichotomy of the type above for $\dens{R}{\dconst}{E}$,
we have the following weak dichotomy.

\begin{thm}\label{thm:main-weak}
Let $\dconst >1$. 
Then there is a constant $A>0$, depending only on $\dconst$, $p$ and $X$, 
such that for every $E\subset X$ one of the following statements holds:
\begin{enumerate}
\item 
$\lim_{R\to\infty}\dens{R}{\dconst}{E}=0$, 
\item 
$\liminf_{R\to\infty}\dens{R}{\dconst}{E}\ge A$.
\end{enumerate}
Furthermore, the two possibilities listed above are independent of $\dconst>1$,
with the exception that the constant $A$ depends on $\dconst$.
\end{thm}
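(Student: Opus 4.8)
The plan is to deduce the weak dichotomy from the sharp dichotomy for inner densities in \refthm{main}, by establishing a one-sided quantitative comparison of $\dens{r}{\dconst}{E}$ with $\densin{r}{\dconst'}{E}$ based on \refeq{BinB} and the standard two-sided estimate for the capacity of concentric annuli. First, note that statement~(1) above is literally condition~(a) of \refthm{main}; since (a) is there equivalent to~(i), and both~(i) and~(ii) are independent of $\dconst>1$, the alternative ``(1) holds'' versus ``(1) fails'' is independent of $\dconst$, which gives the last assertion of the theorem. So fix $\dconst>1$ and assume that (1) fails; then the failure of~(a) is equivalent, by \refthm{main}, to~(ii), and by the stated independence
\[
\lim_{R\to\infty}\densin{R}{\sigma}{E}=1\qquad\text{for every }\sigma>1,
\]
which I will use with $\sigma=L\dconst$.

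It then suffices to produce $A=A(\dconst,p,X)>0$ with $\dens{r}{\dconst}{E}\ge A\,\densin{r}{L\dconst}{E}$ for all $r>0$, since then $\liminf_{R\to\infty}\dens{R}{\dconst}{E}\ge A$. Fix $x\in X$ and $r>0$. By \refeq{BinB}, $\Bin(x,r)\subset B(x,r)$ and $B(x,\dconst r)\subset\Bin(x,L\dconst r)$, so, using that $\capapED{\cdot}{\cdot}$ is nondecreasing in the first and nonincreasing in the second argument together with the definition of $\densin{r}{L\dconst}{E}$ as an infimum,
\[
\capapED{E\cap B(x,r)}{B(x,\dconst r)}\ \ge\ \capapED{E\cap\Bin(x,r)}{\Bin(x,L\dconst r)}\ \ge\ \densin{r}{L\dconst}{E}\;\capapED{\Bin(x,r)}{\Bin(x,L\dconst r)}.
\]
Since $\din$ is $L$-bi-Lipschitz to $d$ by \refeq{BinB}, the space $(X,\din,\mu)$ is again complete, doubling and supports a \p-Poincar\'e inequality, and $\mu(\Bin(x,r))\approx\mu(B(x,r))$. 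Hence the two-sided estimate $\capapED{B(y,\rho)}{B(y,\lambda\rho)}\approx\mu(B(y,\rho))\rho^{-p}$, valid for the balls of both metrics with comparison constants depending only on $\lambda$, $p$ and $X$, yields
\[
\capapED{\Bin(x,r)}{\Bin(x,L\dconst r)}\ \ge\ c_1\,\mu(\Bin(x,r))\,r^{-p}\ \ge\ c_2\,\mu(B(x,r))\,r^{-p}\ \ge\ c_3\,\capapED{B(x,r)}{B(x,\dconst r)}
\]
with $c_1,c_2,c_3>0$ depending only on $\dconst$, $p$ and $X$. Combining the two displays, rearranging, and taking the infimum over $x\in X$ gives $\dens{r}{\dconst}{E}\ge c_3\,\densin{r}{L\dconst}{E}$, so $A=c_3$ works.

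I expect the only substantial ingredient to be the annulus estimate $\capapED{B(y,\rho)}{B(y,\lambda\rho)}\approx\mu(B(y,\rho))\rho^{-p}$: its upper bound follows at once from a Lipschitz cutoff between the two concentric balls and the doubling property, while the matching lower bound is a Sobolev-type inequality and genuinely uses the \p-Poincar\'e inequality. The one point requiring care is that this estimate, and hence $\capapED{\Bin(x,r)}{\Bin(x,\lambda r)}\approx\mu(\Bin(x,r))r^{-p}\approx\mu(B(x,r))r^{-p}$, is available for inner balls; this holds because $\din$ is $L$-bi-Lipschitz to $d$, which transfers completeness, doubling and the \p-Poincar\'e inequality, with controlled constants, to $(X,\din,\mu)$ --- facts recorded in Section~\ref{sect-prelim}. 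Apart from this, the argument uses only monotonicity of the variational capacity and the inclusions \refeq{BinB}.
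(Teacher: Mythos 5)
Your argument is correct and takes essentially the same route as the paper: from \refthm{main}, if (1) fails then $\densin{R}{\sigma}{E}\to1$ for every $\sigma>1$, and the only remaining ingredient is a one-sided comparison $\dens{r}{\dconst}{E}\ge c\,\densinname(\cdot,\cdot,E)$ with $c$ depending only on $\dconst$, $p$, $X$. The paper simply cites the first inequality of \reflem{dens=densin}, namely $\densin{r}{\dconst}{E}\le C\,\dens{r}{\dconst}{E}$, whereas you re-derive an equivalent inequality (with $L\dconst$ in place of $\dconst$ and the annulus estimate \refeq{cap-est-0} supplying the comparison of denominators); this is precisely the content and method of that lemma, so you could have shortened the argument by invoking it directly.
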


One may ask if there can be a similar dichotomy
for other capacities as well.
In \cite{Aik-Riesz} the first author observed that the Riesz capacity of order $\alpha$
($0<\alpha\le2$) in the Euclidean space has the same dichotomy property.
On the other hand, we show in \refexa{Cp} that the Sobolev capacity $C_p$ has neither dichotomy nor weak dichotomy
even in the linear case $p=2$ on unweighted $\R^n$.
It would be interesting to characterize capacities whose global densities have dichotomy.

The outline of the paper is as follows.
In Section~\ref{sect-prelim} we introduce the necessary
background from nonlinear analysis
 on metric spaces.
In Section~\ref{sect-lower-cap-density}
 we recall some basic estimates for the variational capacity and 
use them to deduce comparison results for the capacity 
density functions $\densname$ and $\densinname$.
In Section~\ref{sect-superlevel} we deduce an
identity for the capacity of superlevel sets for the capacitary potentials. 
Similar estimates
have earlier been obtained in \cite{MR1869615}, but here we obtain
an exact identity.

In the subsequent
two sections, we give the proof of \refthm{main}, through
the use of a number of simpler 
lemmas. 
Also \refthm{main-weak} is obtained therein.
In Section~\ref{sect-counterex} we give the key counterexample yielding 
\refthm{nodicho}, and another counterexample  showing that there is
no dichotomy for the Sobolev capacity.

Finally, in the last section we define capacitarily stable collections,
show that they satisfy a dichotomy, and 
give examples of such families, including families of John domains and families
of domains satisfying the interior corkscrew condition.

\section{Notation and preliminaries}
\label{sect-prelim}

We assume throughout the paper
that $1<p<\infty$ and that $X=(X,d,\mu)$ is an unbounded
complete metric space equipped
with a metric $d$ and a doubling  measure $\mu$, i.e.,
there exists $C>0$ such that for all balls
$B=B(x_0,r):=\{x\in X: d(x,x_0)<r\}$ in~$X$,
\begin{equation*}
        0 < \mu(2B) \le C \mu(B) < \infty.
\end{equation*}
Here and elsewhere we let $\lambda B=B(x_0,\lambda r)$.
We will also assume that $X$ supports a \p-Poincar\'e inequality, see below,
and that $\Dom\subset X$ is a nonempty bounded open set.

Proofs of the results in 
this section, as well as historical comments, can be found in the monographs
\cite{MR2867756}  and
\cite{MR3363168}.

We will only consider curves which are nonconstant, compact
and 
rectifiable (i.e., have finite length), and thus each curve can 
be parameterized by its arc length $ds$. 
A property is said to hold for \emph{\p-almost every curve}
if it fails only for a curve family $\Ga$ with zero \p-modulus, 
i.e., there exists $0\le\rho\in L^p(X)$ such that 
$\int_\ga \rho\,ds=\infty$ for every curve $\ga\in\Ga$.

Following \cite{KoMc} and
\cite{HeKo98} we introduce weak upper gradients as follows.

\begin{deff} 
A measurable function $g : X \to [0,\infty]$  is a \emph{\p-weak upper gradient} 
of a function $f: X \to [-\infty,\infty]$
if for \p-almost every  
curve $\gamma : [0,\ell(\gamma)] \to X$,
\[
        |f(\gamma(0)) - f(\gamma(\ell(\gamma)))| \le \int_{\gamma} g\,ds,
\]
where the left-hand side is considered to be $\infty$ 
whenever at least one of the terms therein is infinite.
\end{deff}

If $f$ has a \p-weak upper gradient in $L^p(X)$, then
it has an a.e.\ unique \emph{minimal \p-weak upper gradient} $g_f \in L^p(X)$
in the sense that for every \p-weak upper gradient $g \in L^p(X)$ of $f$ we have
$g_f \le g$ a.e., see \cite{Sh-harm}.
Following \cite{Sh-rev}, 
we define a version of Sobolev spaces on the metric space $X$.

\begin{deff} \label{deff-Np}
For a measurable function $f: X\to[-\infty,\infty]$, let 
\[
        \|f\|_{\Np(X)} = \biggl( \int_X |f|^p \, d\mu 
                + \inf_g  \int_X g^p \, d\mu \biggr)^{1/p},
\]
where the infimum is taken over all \p-weak upper gradients $g$ of $f$.
The \emph{Newtonian space} on $X$ is 
\[
        \Np (X) = \{f: \|f\|_{\Np(X)} <\infty \}.
\]
\end{deff}
% \medskip needed as def ends with display
\medskip

The space $\Np(X)/{\sim}$, where  $f \sim h$ if and only if $\|f-h\|_{\Np(X)}=0$,
is a Banach space,  see \cite{Sh-rev}.
In this paper we assume that functions in $\Np(X)$
 are defined everywhere (with values in $[-\infty,\infty]$),
not just up to an equivalence class in the corresponding function space.
Note that a modification of an $\Np(X)$-function on a set of measure
zero does not necessarily belong to $\Np(X)$.

The (Sobolev) \emph{capacity} of an arbitrary set $E\subset X$ is
\[
\Cp(E) = \inf_u\|u\|_{\Np(X)}^p,
\]
where the infimum is taken over all $u \in \Np(X)$ such that
$u\geq 1$ on $E$.
A property holds \emph{quasieverywhere} (q.e.)\ 
if the set of points  for which the property does not hold 
has capacity zero. 
The capacity is the correct gauge 
for distinguishing between two Newtonian functions. 
If $u \in \Np(X)$, then $u \sim v$ if and only if $u=v$ q.e.
Moreover, 
if $u,v \in \Np(X)$ and $u= v$ a.e., then $u=v$ q.e.

\begin{deff} \label{def.PI.}
We say that $X$ supports a \emph{\p-Poincar\'e inequality}  if
there exist constants $C>0$ and $\lambda \ge 1$
such that for all balls $B \subset X$,
all integrable functions $f$ on $X$ and all \p-weak upper gradients $g$ of $f$,
\[
        \vint_{B} |f-f_B| \,d\mu
        \le C \diam(B) \biggl( \vint_{\lambda B} g^{p} \,d\mu \biggr)^{1/p},
\]where $ f_B
 :=\vint_B f \,d\mu
:= \int_B f\, d\mu/\mu(B)$.
\end{deff}

\emph{From now on we assume that $X$ supports a \p-Poincar\'e inequality.}

\medskip

Let $\Om\subset X$ be open.
We define the \emph{variational capacity}
 $\capapED{E}{\Dom}$ of $E \subset \Om$ by
\begin{equation} \label{eq:varcap}
\capapED{E}{\Dom}
=\inf_u\int_\Dom g_u^p\,d\mu,
\end{equation}
where the infimum is taken over all $u\in\Np(X)$ such that $u=1$ q.e.\ on $E$
and $u=0$ everywhere on $X \setm \Om$; we call such functions \emph{admissible}.
(One can equivalently assume that $u=1$ (quasi)everywhere  on $E$ 
and $u=0$ (quasi)everywhere on $X \setm \Om$.)

If there is an admissible function $u$ (which happens if and only if 
$\capapED{E}{\Dom}< \infty$), then there is also a minimizer of the problem 
\eqref{eq:varcap} and 
it is unique up to sets of capacity zero. Moreover, there
is a unique minimizer 
$u_E^\Dom$ which is also  \emph{lower semicontinuously regularized} in $\Om$, i.e.,
\[
 u_E^\Dom(x):=\essliminf_{y\to x} u_E^\Dom(y)
   := \lim_{r \to 0}\, \Bigl(\essinf_{B(x,r)} u_E^\Dom\Bigr),
 \quad x \in \Om.
\]
This unique minimizer $u_E^\Dom$ is 
the \emph{capacitary potential} of $E$ in $\Dom$;
it is also referred to as the capacitary potential 
for $\capapED{E}{\Dom}$.
When it exists,
the capacitary potential $u_E^\Dom$
satisfies
\begin{equation}\label{eq:gEDp}
\capapED{E}{\Dom}=\int_\Dom g_{E}^p\,d\mu,
\end{equation}
where $g_E$ is the  minimal \p-weak upper gradient of $u_E^\Dom$.
By definition $u_E^\Dom(x)=0$ for $x \notin \Om$.

Under our assumptions, $(X,d)$ is $L$-quasiconvex,
with $L$ depending only on $p$ and $X$.
Here and below, when we say that a constant depends on 
$p$ and $X$ we really mean that it depends on $p$,
the doubling constant  and the constants in the \p-Poincar\'e 
inequality.
It follows from the quasiconvexity that
the inner metric 
(as defined in \eqref{eq:din})
is indeed a metric on $X$.
Moreover, arc length for curves is the same with respect to $d$ and $\din$.
Thus the class of \p-weak upper gradients of a function is
also the same
with respect to both metrics, and as a consequence
$\Np(X)$ is the same for both metrics.
Moreover, $(X,\din,\mu)$ satisfies our doubling and Poincar\'e
assumptions,
 and thus
the theory is directly applicable also with respect to $\din$.

We say that two nonnegative quantities $a$ and $b$ are \emph{comparable},
and write $a\simeq b$, if  
$a/C \le b \le Ca$
for
some constant $\const\ge1$, where the
constant $\const$ is referred to as the \emph{constant of comparison}.

\section{Comparison of global lower capacity densities}
\label{sect-lower-cap-density}

We recall some  well-known estimates for the capacity in balls.

\begin{lem}[{\cite[Proposition~6.16 and Lemma~11.22]{MR2867756}}]
\label{lem:capball}
Let $0<a<b$.
Then 
\begin{equation} \label{eq:cap-est-0}
\capapED{B(x,ar)}{B(x,br)}\simeq 
r^{-p}\mu(B(x,r)), 
\end{equation}
where the constant of comparison depends
only on $a$, $b$, $p$ and $X$.
Moreover, if $1<s<t$, then
\begin{equation} \label{eq:cap-est}
\capapED{E}{B(x,tr)}\le\capapED{E}{B(x,sr)}\le\const\capapED{E}{B(x,tr)}
\qtext{for $E\subset B(x,r)$,}\\
\end{equation}
where $\const>1$ depends only on $s$, $t$, $p$ and $X$. 

The corresponding estimates with
respect to the inner metric also hold.
\end{lem}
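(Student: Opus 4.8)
The plan is to prove the two halves of \eqref{eq:cap-est-0} by different means, and likewise the two inequalities in \eqref{eq:cap-est}; the inner-metric statements then require no extra work, since $(X,\din,\mu)$ satisfies the same doubling and \p-Poincar\'e assumptions and has the same Newtonian space and minimal \p-weak upper gradients, so every argument below applies verbatim with $B$ replaced by $\Bin$ and $d$ by $\din$.

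For the upper bound in \eqref{eq:cap-est-0} I would simply test the capacity with the radial cutoff that equals $1$ on $B(x,ar)$, vanishes outside $B(x,br)$, and is $((b-a)r)^{-1}$-Lipschitz, so that $g_u\le((b-a)r)^{-1}\chi_{B(x,br)}$; integrating $g_u^p$ and using the doubling property to pass from $\mu(B(x,br))$ to $\mu(B(x,r))$ gives $\capapED{B(x,ar)}{B(x,br)}\le\const r^{-p}\mu(B(x,r))$. For the reverse inequality I would take the capacitary potential $u=u_{B(x,ar)}^{B(x,br)}$, so that $0\le u\le1$, $u=1$ q.e.\ on $B(x,ar)$, and $\capapED{B(x,ar)}{B(x,br)}=\int_{B(x,br)}g_u^p\,d\mu$, and invoke the Sobolev inequality for functions vanishing outside a ball, $\int_{B(x,br)}u^p\,d\mu\le\const r^p\int_{B(x,br)}g_u^p\,d\mu$. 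Since $u^p=1$ a.e.\ on $B(x,ar)$, this yields $\capapED{B(x,ar)}{B(x,br)}\ge\const^{-1}r^{-p}\mu(B(x,ar))$, and doubling turns $\mu(B(x,ar))$ into a fixed multiple of $\mu(B(x,r))$.

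For \eqref{eq:cap-est} the first inequality is pure monotonicity: if $1<s<t$ and $u$ is admissible for $\capapED{E}{B(x,sr)}$, then $u$ vanishes outside $B(x,sr)\subset B(x,tr)$, hence is admissible for $\capapED{E}{B(x,tr)}$ with the same energy (its minimal \p-weak upper gradient vanishes a.e.\ off $B(x,sr)$), and taking the infimum gives $\capapED{E}{B(x,tr)}\le\capapED{E}{B(x,sr)}$. For the reverse inequality I would multiply the capacitary potential $u$ for $\capapED{E}{B(x,tr)}$ (so $0\le u\le1$, $u=1$ q.e.\ on $E\subset B(x,r)$, $u=0$ off $B(x,tr)$) by a $((s-1)r)^{-1}$-Lipschitz cutoff $\eta$ equal to $1$ on $B(x,r)$ and $0$ off $B(x,sr)$. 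Then $v=\eta u$ is admissible for $\capapED{E}{B(x,sr)}$, and the Leibniz rule $g_v\le\eta g_u+u g_\eta$ gives
\[
\capapED{E}{B(x,sr)}\le\int_{B(x,sr)}g_v^p\,d\mu
\le\const\int_{B(x,tr)}g_u^p\,d\mu+\frac{\const}{r^p}\int_{B(x,tr)}u^p\,d\mu.
\]
The first term equals $\const\capapED{E}{B(x,tr)}$, and the Sobolev inequality applied on $B(x,tr)$ bounds the second by $\const\capapED{E}{B(x,tr)}$ as well, which is the claim.

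The one genuine obstacle, shared by the lower bound in \eqref{eq:cap-est-0} and the second half of \eqref{eq:cap-est}, is the Sobolev inequality $\int_{B}u^p\,d\mu\le\const r^p\int_{B}g_u^p\,d\mu$ for $u$ vanishing outside a ball $B=B(x,r)$: one has to upgrade the \p-Poincar\'e inequality to an honest $L^p$-norm estimate, and this uses that the outer portion of a concentric ball on which $u$ vanishes carries a fixed fraction of the mass, i.e.\ the reverse doubling property coming from $X$ being connected (which in turn follows from quasiconvexity). Everything else is routine manipulation of admissible functions together with the doubling property.
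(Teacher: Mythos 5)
Your argument is correct; the paper itself gives no proof here but simply cites \cite[Proposition~6.16 and Lemma~11.22]{MR2867756}, and the cutoff/monotonicity steps together with the Maz\'{}ya--Sobolev inequality for Newtonian functions vanishing outside a ball (valid because connectedness gives reverse doubling) are exactly the standard arguments underlying those cited results. The one place to be pedantic is the Sobolev inequality: since $u$ vanishes only \emph{outside} $B(x,br)$, one should apply the inequality on a concentric ball of, say, twice the radius, where $u$ vanishes on a fixed fraction of the measure, and then use that $g_u=0$ a.e.\ off $B(x,br)$; you clearly have this in mind, so this is a presentation point rather than a gap.
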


Using the estimates above,
we can
show that $\dens{r}{\dconst}{E}$ and $\densin{r}{\dconst}{E}$ are comparable
in the following sense.

\begin{lem}\label{lem:dens=densin}
Let $\tau,\tau'>1$.
For every $r>0$ and $E\subset X$ we have
\begin{equation} \label{eq:D-Din-1}
\densin{r}{\dconst}{E}
\le\const\dens{r}{\dconst}{E}
\le\const^2\densin{Lr}{\dconst}{E},
\end{equation}
where $\const>1$ depends only on 
$\tau$, $p$ and $X$, and $L$ is the quasiconvexity constant.
Moreover,
\[ 
\dens{r}{\dconst}{E}\simeq \dens{r}{\dconst'}{E}
\qtext{and}\quad
\densin{r}{\dconst}{E}\simeq \densin{r}{\dconst'}{E},
\] 
where the constants of comparison depend only on 
$\tau$, $\tau'$, $p$ and $X$.
\end{lem}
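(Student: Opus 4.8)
The plan is to prove all the claimed inequalities and comparabilities by a direct monotonicity-and-scaling argument, using only the two estimates in \reflem{capball} together with the geometric inclusions $\Bin(x,r)\subset B(x,r)\subset\Bin(x,Lr)$ from \eqref{eq:BinB}. The key elementary observation is that for a fixed center $x$, if $E'\subset E''$ are two ``inner balls'' nested between comparable radii, then the capacitary ratios defining $\densname$ and $\densinname$ can be compared term by term: numerators are monotone in the set $E\cap(\cdot)$ and in the ambient domain via \eqref{eq:cap-est}, while the denominators are all comparable to $r^{-p}\mu(B(x,r))$ by \eqref{eq:cap-est-0} (applied in both metrics, which is legitimate since $(X,\din,\mu)$ also satisfies the standing assumptions, and since $\mu(\Bin(x,r))\simeq\mu(B(x,r))$ by \eqref{eq:BinB} and doubling).

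First I would prove the left inequality $\densinname(r,\tau,E)\le C\,\densname(r,\tau,E)$ in \eqref{eq:D-Din-1}. Fix $x\in X$ and estimate the $\densname$-ratio at $x$ from below by an inner-ball ratio at the same center: since $\Bin(x,r)\subset B(x,r)$ and $\Bin(x,\tau r)\subset B(x,\tau r)$, monotonicity of the variational capacity in both arguments gives $\capap(E\cap\Bin(x,r),\Bin(x,\tau r))\ge\capap(E\cap\Bin(x,r),B(x,\tau r))$ is the wrong direction, so instead I would bound the numerator $\capap(E\cap B(x,r),B(x,\tau r))$ from below by $\capap(E\cap\Bin(x,r),B(x,\tau r))$, and then bound the latter below by a constant times $\capap(E\cap\Bin(x,r),\Bin(x,\tau' r))$ for a suitable $\tau'>1$ via \eqref{eq:cap-est} combined with \eqref{eq:BinB}; meanwhile $\capap(B(x,r),B(x,\tau r))\simeq r^{-p}\mu(B(x,r))\simeq\capap(\Bin(x,r),\Bin(x,\tau' r))$ by \eqref{eq:cap-est-0} in both metrics. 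Taking infima over $x$ and then using the $\tau$-independence (proved next) yields the claim. The right inequality $\densname(r,\tau,E)\le C\,\densinname(Lr,\tau,E)$ is dual: now use $B(x,r)\subset\Bin(x,Lr)$ together with $\Bin(x,\tau L r/\sigma)\subset B(x,\tau r)$ for an appropriate intermediate scale $\sigma$, apply the capacity monotonicity and \eqref{eq:cap-est}, and again normalize denominators through \eqref{eq:cap-est-0}.

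For the $\tau$-independence $\densname(r,\tau,E)\simeq\densname(r,\tau',E)$ (and likewise for $\densinname$), I would assume without loss of generality $\tau<\tau'$, so that $B(x,\tau r)\subset B(x,\tau' r)$. Monotonicity in the ambient domain gives $\capap(E\cap B(x,r),B(x,\tau' r))\le\capap(E\cap B(x,r),B(x,\tau r))$, which is one half after dividing by the comparable denominators (here $\capap(B(x,r),B(x,\tau r))\simeq\capap(B(x,r),B(x,\tau' r))\simeq r^{-p}\mu(B(x,r))$ by \eqref{eq:cap-est-0}). For the reverse half, \eqref{eq:cap-est} with $s,t$ chosen so that $\tau$ and $\tau'$ play the roles of the inner and outer multiples gives $\capap(E\cap B(x,r),B(x,\tau r))\le C\,\capap(E\cap B(x,r),B(x,\tau' r))$; dividing by the comparable denominators again and taking infima over $x$ finishes it. The inner-metric version is identical with $B$ replaced by $\Bin$ everywhere.

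The main obstacle I expect is bookkeeping rather than anything conceptual: one must choose the intermediate radii (the $\tau'$, $\sigma$ above) so that all the required inclusions between ordinary and inner balls hold simultaneously, and one must keep track of which constants depend on $\tau,\tau',p,X$ versus on $L$ — but $L$ itself depends only on $p$ and $X$, so in the end every constant has the asserted dependence. A minor point to handle carefully is the degenerate case $\capap(E\cap B(x,r),B(x,\tau r))=\infty$ (when $E\cap B(x,r)$ is not of finite capacity relative to $B(x,\tau r)$): there the ratio is $+\infty$ and the inequalities hold trivially, so one may assume finiteness throughout.
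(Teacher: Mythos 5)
Your proposal is correct and takes essentially the same route as the paper's proof: both combine the inclusions $\Bin(x,r)\subset B(x,r)\subset\Bin(x,Lr)$ from \eqref{eq:BinB} with monotonicity of $\cp$ in the compact set, anti-monotonicity in the ambient domain, and \reflem{capball} (in both metrics) to normalize the denominators and adjust the outer radii. One small organizational remark: you invoke ``the $\tau$-independence (proved next)'' while establishing the left inequality, but that second assertion is itself a self-contained consequence of \eqref{eq:cap-est}, so to avoid the appearance of circularity you should simply prove it first, or (as the paper does) fold the radius adjustment directly into the chain of estimates via $\capap(E\cap\Bin(x,r),\Bin(x,\tau r))\simeq\capap(E\cap\Bin(x,r),\Bin(x,\tau Lr))\le\capap(E\cap\Bin(x,r),B(x,\tau r))$, with no reference to a separate $\tau'$.
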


\begin{proof}
In view of \refeq{BinB} and \reflem{capball} we see that
\begin{align*}
\capap(E\cap\Bin(x,r),\Bin(x,\tau r))
& \simeq \capap(E\cap\Bin(x,r),\Bin(x,\tau Lr))\\
&\le \capap(E\cap\Bin(x,r),B(x,\tau r)) \\
&\le \capap(E\cap B(x,r),B(x,\tau r))
\end{align*}
and
\begin{align*}
\capap(E\cap B(x,r),B(x,\tau r))
&\simeq
\capap(E\cap B(x,r),B(x,\tau L r))\\
&\le\capap(E\cap \Bin(x,Lr),B(x,\tau L r)) \\
&\le\capap(E\cap \Bin(x,Lr),\Bin(x,\tau L r)),
\end{align*}
with constants of comparison depending only on 
$\tau$, $p$ and $X$.
Hence (using \refeq{BinB} and \refeq{cap-est-0}
to see that the denominators are
comparable), 
\begin{align*}
\frac{\capap(E\cap\Bin(x,r),\Bin(x,\tau r))}{\capap(\Bin(x,r),\Bin(x,\tau r))}
&\le
\const'\frac{\capap(E\cap B(x,r),B(x,\tau r))}{\capap(B(x,r),B(x,\tau r))}\\
&\le
\const''\frac{\capap(E\cap\Bin(x,Lr),\Bin(x,\tau Lr))}{\capap(\Bin(x,Lr),\Bin(x,\tau Lr))}.
\end{align*}
Taking the infima with respect to $x\in X$ 
yields \eqref{eq:D-Din-1}.
The last assertion follows directly from
\eqref{eq:cap-est} (and the corresponding estimate in the inner metric).
\end{proof}

\section{Capacity of superlevel sets of a capacitary potential}
\label{sect-superlevel}

In this section we evaluate
the capacity
of superlevel sets 
of the capacitary potential, which may be of independent interest.

\begin{prop}\label{prop:capapEM}
Let $E\subset\Dom$ with
$\cp(E,\Dom)<\infty$, and
$u_E$ be the capacitary potential  of $E$ in $\Dom$.
For $0<M \le 1$ let $E_M=\{x\in\Dom:u_E(x)>M\}$ and
$E'_M=\{x\in\Dom:u_E(x)\ge M\}$.
Then
\begin{alignat*}{2}
\capapED{E_M}{\Dom}  
&=M^{1-p}\capapED{E}{\Dom}, &\quad& \text{if } 0 < M < 1, \\
\capapED{E'_M}{\Dom}
&=M^{1-p}\capapED{E}{\Dom}, &\quad& \text{if } 0 < M \le 1. 
\end{alignat*}
\end{prop}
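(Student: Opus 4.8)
Write $u=u_E$ for the lsc-regularized capacitary potential of $E$ in $\Dom$ and $g=g_E$ for its minimal \p-weak upper gradient, so $\cp(E,\Dom)=\int_\Dom g^p\,d\mu$ by \eqref{eq:gEDp}. Truncating the minimizer shows $0\le u\le1$ q.e., and since also $u=0$ off $\Dom$ and $M>0$ we have $E_M,E'_M\subset\Dom$. Put $a=\int_{\Dom\cap\{u<M\}}g^p\,d\mu$ and $b=\int_{\Dom\cap\{u>M\}}g^p\,d\mu$; because $g=0$ a.e.\ on the level set $\{u=M\}$, $a+b=\cp(E,\Dom)$ (and $b=0$ when $M=1$, since then $\{u>1\}$ has capacity zero).

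The heart of the proof is the exact energy splitting $a=M\cp(E,\Dom)$ for $0<M<1$, which I would obtain from the first-order minimality of $u$ in \eqref{eq:varcap} tested against a one-parameter family of competitors obtained by reparametrizing the levels of $u$. For $m\in(0,1)$ let $\phi_m\colon\R\to[0,1]$ be the nondecreasing piecewise linear function that equals $0$ on $(-\infty,0]$, equals $1$ on $[1,\infty)$, and satisfies $\phi_m(M)=m$ (slope $m/M$ on $[0,M]$, slope $(1-m)/(1-M)$ on $[M,1]$), and set $v_m=\phi_m\circ u$. Then $v_m\in\Np(X)$ is admissible for $\cp(E,\Dom)$ (it is $\phi_m(1)=1$ q.e.\ on $E$ and $\phi_m(0)=0$ off $\Dom$), and the chain rule for minimal \p-weak upper gradients gives $g_{v_m}=(\phi_m'\circ u)\,g$ a.e.; the kink at level $M$ does not matter since $g=0$ a.e.\ on $\{u=M\}$. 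Hence
\[
\Psi(m):=\int_\Dom g_{v_m}^p\,d\mu=\Bigl(\frac{m}{M}\Bigr)^{p}a+\Bigl(\frac{1-m}{1-M}\Bigr)^{p}b.
\]
Minimality of $u$ forces $\Psi(m)\ge\Psi(M)=a+b$ for all $m\in(0,1)$, so $\Psi'(M)=0$, i.e.\ $a/M=b/(1-M)$, and therefore $a=Ma+Mb=M(a+b)=M\cp(E,\Dom)$.

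Granting this, both identities follow by comparing explicit competitors. \emph{Upper bounds:} $v:=\min(u/M,1)\in\Np(X)$ equals $1$ on $E_M$ (also on $E'_M$) and vanishes off $\Dom$, so it is admissible for $\cp(E_M,\Dom)$ and for $\cp(E'_M,\Dom)$, and a second application of the chain rule gives $\int_\Dom g_v^p\,d\mu=M^{-p}a=M^{1-p}\cp(E,\Dom)$; whence $\cp(E_M,\Dom)\le M^{1-p}\cp(E,\Dom)$ and likewise for $E'_M$. \emph{Lower bound:} for $0<M<1$ and any admissible $v'$ for $\cp(E_M,\Dom)$, the function $z:=Mv'+\max(u-M,0)$ is admissible for $\cp(E,\Dom)$ (since $E\subset E_M$ q.e.\ gives $z=M+(1-M)=1$ q.e.\ on $E$, and $z=0$ off $\Dom$); as $v'=1$ a.e.\ on $E_M$ its minimal upper gradient vanishes a.e.\ there, while $g_{\max(u-M,0)}$ equals $g$ on $\{u>M\}$ and $0$ elsewhere, so $g_z\le g$ a.e.\ on $E_M$ and $g_z\le Mg_{v'}$ a.e.\ on $\Dom\setm E_M$. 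Thus
\[
\cp(E,\Dom)\le\int_\Dom g_z^p\,d\mu\le b+M^{p}\int_\Dom g_{v'}^p\,d\mu,
\]
so $\int_\Dom g_{v'}^p\,d\mu\ge M^{-p}(\cp(E,\Dom)-b)=M^{-p}a=M^{1-p}\cp(E,\Dom)$, and taking the infimum over $v'$ yields $\cp(E_M,\Dom)\ge M^{1-p}\cp(E,\Dom)$. Since $E_M\subset E'_M$ this lower bound passes to $E'_M$ for $0<M<1$, while for $M=1$ the identity $\cp(E'_1,\Dom)=\cp(E,\Dom)$ follows from $E\subset E'_1$ q.e.\ (monotonicity) together with the upper bound. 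Combining the upper and lower bounds proves the proposition.

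I expect the only genuinely substantial point to be the energy splitting $a=M\cp(E,\Dom)$; once the chain rule $g_{\phi\circ u}=(\phi'\circ u)g$ a.e.\ for Lipschitz $\phi$ and the vanishing of $g$ on $\{u=M\}$ are invoked (both standard for Newtonian functions), the optimality identity $\Psi'(M)=0$ is a one-line computation. The remaining work is routine: admissibility bookkeeping and comparisons of energies.
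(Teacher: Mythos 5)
Your proof is correct, and the crucial step — the energy splitting $\int_{\{u<M\}}g^p\,d\mu = M\,\cp(E,\Dom)$ obtained from $\Psi'(M)=0$ for the family $v_m=\phi_m\circ u$ — is precisely the paper's argument (the paper's $\Phi_t$ is your $\phi_m$ and the computation of $\varphi'(M)=0$ is identical). The organizational difference is that the paper invokes \cite[Lemma~11.19]{MR2867756} wholesale for the identity $\cp(E_M,\Dom)=\cp(E'_M,\Dom)=M^{-p}\int_{\{0<u<M\}}g^p\,d\mu$, and then the variational optimality argument is all that remains; you instead reprove that identity from scratch, getting the upper bound from the truncated competitor $\min(u/M,1)$ and the lower bound from the splicing competitor $z=Mv'+\max(u-M,0)$. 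Both your explicit competitor arguments check out (in particular $g_z\le g$ a.e.\ on $\{u>M\}$ and $g_z\le M g_{v'}$ a.e.\ on $\{u\le M\}$, using that $g_{v'}=0$ a.e.\ where $v'$ is constant), so what you gain is a fully self-contained proof at the cost of a longer write-up; the paper opts for brevity by offloading the routine part to a cited lemma.
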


This result was obtained for weighted $\R^n$ (with a \p-admissible weight)
in \cite[p.\ 118]{MR1207810}.
Their argument depends on the Euler--Lagrange equation,
which is not available in the metric space setting considered here.
Nevertheless, the weaker estimate 
\begin{equation}    \label{eq-weaker-est-E-M}
\capapED{E_M}{\Dom}\simeq M^{1-p}\capapED{E}{\Dom}
\end{equation}
was obtained in \cite[Lemma~5.4]{MR1869615} via a variational approach.
Our proof of
\refprop{capapEM} is also based on the variational method, 
yet it yields the sharp identity in the metric space setting and
is shorter
than the earlier proofs of \eqref{eq-weaker-est-E-M}
and the proof in \cite[pp.\ 116--118]{MR1207810}.

\begin{proof}
For simplicity, write $g_E$ for the minimal \p-weak upper gradient of $u_E$.
It follows from \cite[Lemma~11.19]{MR2867756} that
\begin{equation} \label{eq-EM}
\capapED{E_M}{\Dom}
=\capapED{E'_M}{\Dom}
=\frac{1}{M^p} \int_{0<u_E<M} g_{E}^p\,d\mu,
\quad \text{if } 0 <M < 1.
\end{equation}
The second equality in \eqref{eq-EM} also holds when $M=1$ (and 
is easier to deduce than for $M<1$).
 Hence, by \refeq{gEDp}, it suffices to show that
\begin{equation}\label{eq:uE<M}
\int_{0<u_E<M} g_{E}^p\,d\mu=M\int_{0<u_E<1} g_{E}^p\,d\mu.
\end{equation}
For $0<t<1$ define the piecewise linear function $\Phi_t(s)$ on $[0,\infty)$ by
\[
\Phi_t(s)=
\begin{cases}
\dfrac {ts}{M} &\text{for }0\le s<M,\\
t+\dfrac{1-t}{1-M}(s-M)&\text{for }M\le s<1,\\
1&\text{for } s\ge1.
\end{cases}
\]
We note that $g_E$ vanishes a.e.\ on each level set 
$\{x\in \Om: u_E(x)=t\}$. Therefore, for
each  $0<t<1$ we see that $v_t(x):=\Phi_t(u_E(x))$ 
is admissible for $\capapED{E}{\Dom}$ and
\[
\phi(t):=
\int_{\Om} g_{v_t}^p\,d\mu
=\biggl(\frac tM\biggr)^p\int_{0<u_E<M} g_E^p\,d\mu
 +\biggl(\frac{1-t}{1-M}\biggr)^p \int_{M<u_E<1} g_E^p\,d\mu.
\]
By definition, $\Phi_M(s)=s$ for $0\le s\le 1$, and so $v_M(x)=u_E(x)$.
Hence
\[
\varphi(t)\ge\varphi(M)=\int_{0<u_E<1}g_{E}^p\,d\mu=\capapED{E}{\Dom}
\]
with equality for $t=M$.
In particular $\varphi'(M)=0$.
Since
\[
\varphi'(t)
=\frac{pt^{p-1}}{M^p}\int_{0<u_E<M}g_{E}^p\,d\mu
-\frac{p(1-t)^{p-1}}{(1-M)^p}\int_{M<u_E<1}g_{E}^p\,d\mu,
\]
it follows from $\varphi'(M)=0$ that
\[
\frac1M\int_{0<u_E<M} g_{E}^p\,d\mu
=\frac1{1-M}\int_{M<u_E<1} g_{E}^p\,d\mu
=\frac1{1-M}\biggl(\int_{0<u_E<1} g_{E}^p\,d\mu-\int_{0<u_E<M} g_{E}^p\,d\mu\biggr),
\]
which yields \refeq{uE<M}.
\end{proof}

\section{Lower estimate of capacity density}

We now use \refprop{capapEM} to deduce estimates for 
the ratio of capacities
in terms of the infimum of the corresponding capacitary potential.

\begin{lem}[cf.\ {\cite[Lemma~4.2]{MR3411153}}]\label{lem:uE<}
Let $A,E\subset\Dom$ 
with 
$\capapED{A}{\Dom}>0$
and $\capapED{E}{\Dom}<\infty$.
Then the  capacitary potential $u_E$ of $E$ in $\Dom$
satisfies
\begin{equation}    \label{eq-inf-le-cap}
\inf_A u_E\le\biggl(\frac{\capapED{E}{\Dom}}{\capapED{A}{\Dom}}\biggr)^{1/(p-1)}.
\end{equation}
\end{lem}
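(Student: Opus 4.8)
The plan is to read off \eqref{eq-inf-le-cap} directly from \refprop{capapEM}, combined with the monotonicity of the variational capacity. Put $M=\inf_A u_E$. If $M=0$ the inequality is trivial, so assume $M>0$. Since the capacitary potential satisfies $0\le u_E\le 1$ in $\Dom$ (truncation at the levels $0$ and $1$ produces an admissible function whose minimal \p-weak upper gradient is no larger, so $0\le u_E\le 1$ q.e.\ by uniqueness of the minimizer, and hence everywhere after lower semicontinuous regularization), we have $0<M\le 1$, which is exactly the range in which \refprop{capapEM} is available.

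The key step is then an elementary set inclusion. By the definition of the infimum, $u_E(x)\ge M$ for every $x\in A$, and since $A\subset\Dom$ this means
\[
A\subset E'_M=\{x\in\Dom:u_E(x)\ge M\}.
\]
Monotonicity of $\capap(\,\cdot\,,\Dom)$ together with \refprop{capapEM} now gives
\[
\capapED{A}{\Dom}\le\capapED{E'_M}{\Dom}=M^{1-p}\capapED{E}{\Dom},
\]
where the right-hand side is finite because $\capapED{E}{\Dom}<\infty$.

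Finally, since $\capapED{A}{\Dom}>0$ and $p-1>0$, I would rearrange the last display into
\[
M^{p-1}\le\frac{\capapED{E}{\Dom}}{\capapED{A}{\Dom}},
\]
that is, $\inf_A u_E=M\le\bigl(\capapED{E}{\Dom}/\capapED{A}{\Dom}\bigr)^{1/(p-1)}$, which is \eqref{eq-inf-le-cap}.

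I do not expect a genuine obstacle here: the statement is essentially a one-line corollary of \refprop{capapEM}, and the only care needed is the preliminary reduction to the range $0<M\le 1$ (disposing of the degenerate case $M=0$ and invoking $u_E\le 1$) so that the proposition applies; the remaining ingredients are just monotonicity of capacity and elementary algebra.
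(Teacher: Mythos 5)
Your argument is the paper's own proof, essentially verbatim: dispose of the trivial case $M=\inf_A u_E=0$, then observe $A\subset E'_M$ and apply \refprop{capapEM} together with monotonicity of capacity, and rearrange. The only addition is your explicit check that $0<M\le 1$ (so that the proposition applies), which the paper leaves implicit; this is a harmless and correct refinement, not a different route.
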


\begin{proof}
If 
 $\inf_A u_E=0$, then \eqref{eq-inf-le-cap}
holds trivially. Now suppose that
$M=\inf_A u_E>0$. 
Then $A\subset E'_M:=\{x\in\Dom: u_E(x)\ge M\}$.
\refprop{capapEM} yields
\[
\capapED{A}{\Dom}
\le\capapED{E'_M}{\Dom}
=M^{1-p}\capapED{E}{\Dom},
\]
which readily gives the required inequality.
\end{proof}

When $A$ is a ball,
there is a converse 
inequality 
to~\eqref{eq-inf-le-cap}
up to a multiplicative constant
depending only on $p$ and $X$.
Let $\Lambda=100\lambda$ with $\lambda\ge1$ being 
the dilation constant in the \p-Poincar\'e inequality.

\begin{lem}[{\cite[Lemma~11.20]{MR2867756}}]\label{lem:uE>}
There exists 
$0<C_0\le1$, depending only on $p$ and $X$, such that if
$E\subset\itoverline{B(x,r)}$, then
the capacitary potential $u_E$ of 
$E$ in $B(x,\Lambda r)$ satisfies
\[
\inf_{B(x,2r)}u_E
\ge C_0
\biggl(\frac{\capapED{E}{B(x,\Lambda r)}}{\capapED{B(x,2 r)}{B(x,\Lambda r)}}\biggr)^{1/(p-1)}.
\]
\end{lem}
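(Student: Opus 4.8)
The plan is to establish the pointwise bound $u_E(y)\ge C_0\,\theta^{1/(p-1)}$ for \emph{every} $y\in B(x,2r)$, where $\theta:=\capapED{E}{B(x,\Lambda r)}/\capapED{B(x,2r)}{B(x,\Lambda r)}$; taking the infimum over $y$ then gives the lemma. We may assume $\capapED{E}{B(x,\Lambda r)}>0$, for otherwise $u_E\equiv0$, both sides vanish, and there is nothing to prove. Recall that the capacitary potential $u_E$ is a superminimizer in $\Dom:=B(x,\Lambda r)$ and is lower semicontinuously regularized, hence a nonnegative \p-superharmonic function in $\Dom$; moreover $u_E$ is \p-harmonic in $\Dom\setminus\itoverline E$ (perturb by functions compactly supported in that open set). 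Since $\Lambda=100\lambda\ge100$, the radius $R:=(\tfrac12\Lambda-1)r$ satisfies $R\ge49r$ and, for every $y\in B(x,2r)$, one has $B(y,2R)\subset B(x,\Lambda r)$, $3r\le R$, and $\itoverline E\subset\itoverline{B(x,r)}\subset B(y,t)\subset B(x,\Lambda r)$ whenever $3r\le t\le R$.

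Next I would pin down the Riesz measure $\nu$ of $u_E$ in $\Dom$. By the previous paragraph $\nu$ is carried by $\itoverline E\subset\itoverline{B(x,r)}$, and for the capacitary potential one has the standard identity $\nu(\Dom)=\capapED{E}{\Dom}$ (comparability would already suffice below). Hence, for every $y\in B(x,2r)$ and every $3r\le t\le R$ we obtain $\nu(B(y,t))=\capapED{E}{B(x,\Lambda r)}$, while the doubling property gives $\mu(B(x,r))\le\mu(B(y,t))\le C\mu(B(x,r))$, with $C$ depending only on $p$ and $X$.

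The crux is the pointwise lower bound for a nonnegative \p-superharmonic function in terms of the Wolff potential of its Riesz measure: there is $c_1>0$, depending only on $p$ and $X$, with
\[
u_E(y)\ge c_1\int_0^{R}\biggl(\frac{t^p\,\nu(B(y,t))}{\mu(B(y,t))}\biggr)^{1/(p-1)}\frac{dt}{t}
\qquad\text{for } y\in B(x,2r).
\]
Restricting the integral to $3r\le t\le R$ and inserting the two estimates just obtained, the integrand becomes comparable to $\bigl(t^p\,\capapED{E}{B(x,\Lambda r)}/\mu(B(x,r))\bigr)^{1/(p-1)}$; since $R\ge49r$, the elementary integration $\int_{3r}^{R}t^{p/(p-1)-1}\,dt\simeq r^{p/(p-1)}$ yields
\[
u_E(y)\ge c_2\biggl(\frac{r^p\,\capapED{E}{B(x,\Lambda r)}}{\mu(B(x,r))}\biggr)^{1/(p-1)},
\]
with $c_2$ depending only on $p$ and $X$. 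Finally, by \reflem{capball} (with $a=2$, $b=\Lambda$) we have $\capapED{B(x,2r)}{B(x,\Lambda r)}\simeq r^{-p}\mu(B(x,r))$, so the last display reads $u_E(y)\ge c_3\,\theta^{1/(p-1)}$; taking the infimum over $y\in B(x,2r)$ and setting $C_0:=\min\{1,c_3\}$ completes the proof.

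The main obstacle is the Wolff-potential estimate itself, which is where the exponent $1/(p-1)$ genuinely comes from and which cannot be bypassed: a bare application of the \p-Poincar\'e inequality to $u_E$ only produces the exponent $1/p$, and since $\theta\le1$ (by monotonicity, as $E\subset\itoverline{B(x,r)}\subset B(x,2r)$) this would be a strictly stronger assertion — and in fact a false one, as is seen already by taking $E$ a small ball in unweighted $\R^n$. If one prefers to avoid Riesz measures and Wolff potentials, one can instead use the weak Harnack inequality for the nonnegative \p-superharmonic $u_E$ to reduce the claim to a lower bound $\vint_{B(x,4r)}u_E^{q_0}\,d\mu\ge c\,\theta^{q_0/(p-1)}$ for a suitable exponent $q_0>0$, and prove the latter by telescoping the weak Harnack estimate over the superlevel sets $\{u_E>2^{-j}\}$, feeding in the identity $\capapED{\{u_E>M\}}{B(x,\Lambda r)}=M^{1-p}\capapED{E}{B(x,\Lambda r)}$ of \refprop{capapEM}; there the delicate point is the bookkeeping that recovers the sharp exponent.
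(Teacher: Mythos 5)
Your proof is correct, but it takes a genuinely different route from the paper's. The paper's sketch sets $M=\sup_{\bdry B(x,2r)}u_E$, uses the minimum principle to see that the superlevel set $\{u_E>M\}$ is contained in $B(x,2r)$, and then applies \refprop{capapEM} once to get $M\ge\theta^{1/(p-1)}$ directly (this is precisely where the paper's exact superlevel-set identity earns its keep); the remaining step $\inf_{B(x,2r)}u_E\ge C_0 M$ is a Harnack comparison for the nonnegative \p-harmonic function $u_E$ on the annulus containing $\bdry B(x,2r)$. You instead reach for the Riesz measure of $u_E$ and the Kilpel\"ainen--Mal\'y-type Wolff potential lower bound, which does give the sharp $1/(p-1)$ exponent after the doubling and \reflem{capball} estimates you carry out correctly. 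This is a legitimate alternative, and your observation that a bare Poincar\'e argument would only yield the wrong exponent $1/p$ (and would thus be false) is a good sanity check. What the paper's route buys is economy and self-containedness: it uses nothing beyond the minimum principle, weak Harnack, and its own \refprop{capapEM}, whereas your route imports the existence of the Riesz measure, the (near-)identity $\nu(\Om)\simeq\capapED{E}{\Om}$, and the pointwise Wolff potential lower bound for \p-superharmonic functions in the metric setting --- all substantially deeper inputs, even though they are available in the literature. Your closing alternative (telescoping weak Harnack over the superlevel sets $\{u_E>2^{-j}\}$) is closer in spirit to the paper but actually does more work than necessary: thanks to the minimum principle, one application of \refprop{capapEM} at the single level $M$ suffices, and the bookkeeping you anticipate is avoided entirely.
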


In metric spaces such an estimate was obtained in
\cite[Lemma~5.6]{MR1869615}
and \cite[Lemma~3.9]{JB-pfine}
under additional
assumptions.
For the reader's convenience, we sketch how this can 
be proved using
Proposition~\ref{prop:capapEM}.

\begin{proof}[Sketch of proof]
Let $M=\sup_{\bd B(x,2r)}u_E$ and let $E_M=\{x\in B(x,\Lambda r):u_E(x)> M\}$.
Then by the minimum principle 
(see \cite[Proposition~9.4 and Theorem~9.13]{MR2867756}),
we get that $E_M\subset B(x,2r)$.
Hence by \refprop{capapEM},
\[
M^{1-p}\capapED{E}{B(x,\Lambda r)}
=\capapED{E_M}{B(x,\Lambda r)}
\le\capapED{B(x,2r)}{B(x,\Lambda r)}
\]
so that
\[ 
M\ge\biggl(
\frac{\capapED{E}{B(x,\Lambda r)}}{\capapED{B(x,2r)}{B(x,\Lambda r)}}
\biggr)^{1/(p-1)}.
\] 
Finally using weak Harnack inequalities it can be shown that
$
\inf_{B(x,2r)}u_E \ge C_0 M,
$
see the proof in \cite{MR2867756}.
\end{proof}

The following lemma is a variant of a comparison principle for capacitary
potentials and will be useful when proving the subsequent results.

\begin{lem}\label{lem:comp-princ-cap-pot}
Let $V\subset\Dom$ be open 
and let $E'\subset E\subset\Dom$ be arbitrary
sets such that $E'\subset V$, $\cp(E',V)<\infty$ and $\cp(E,\Om)<\infty$.
Let $u^V_{E'}$ and $u^\Dom_E$ be the corresponding capacitary potentials
and assume that $1-u^\Dom_E\le a$ on $\bdry V$
with $0 \le a \le 1$.
Then $1-u^\Dom_E\le a (1-u^V_{E'})$ in~$V$.
\end{lem}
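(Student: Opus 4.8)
The plan is to set $w = 1 - u^\Dom_E$ and $v = 1 - u^V_{E'}$, and show that the function $h := w - a v$ is a nonnegative $p$-superminimizer (or at least satisfies a comparison inequality) forcing $h \ge 0$ on $V$, which is exactly the claim. The key observation is that $u^\Dom_E$ is a $p$-harmonic function in $\Dom \setm E$ (hence in $V \setm E'$ once we note $E' \subset V$), and likewise $u^V_{E'}$ is $p$-harmonic in $V \setm E'$; thus $w$ and $v$ are $p$-harmonic there. Since $p$-harmonicity is not preserved under the linear combination $w - av$ (the equation is nonlinear), I would instead argue via the comparison principle directly on the two $p$-harmonic functions, after rescaling.

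First I would reduce to the boundary data. On $\bdry V$ we are given $w \le a$, and since $0 \le u^V_{E'} \le 1$ we have $v = 1 - u^V_{E'} \ge 0$, so on the part of $\bdry V$ lying in $\Dom$ the comparison function $av$ need not dominate $w$ pointwise — this is the subtlety. The resolution is to observe that $av$ is itself $p$-harmonic in $V \setm E'$ only when $a$ is a constant, which it is here; so $av$ is a $p$-harmonic function in $V \setm E'$. On $E'$ we have $v = 1-u^V_{E'} \le 1-1 = 0$ q.e.\ (since $u^V_{E'}=1$ q.e.\ on $E'$), hence $av \le 0$, whereas $w = 1 - u^\Dom_E$ could be positive there; but $E' \subset E$ gives $u^\Dom_E = 1$ q.e.\ on $E'$, so $w \le 0$ q.e.\ on $E'$ as well, matching. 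So on $\bdry(V\setm E')$, which consists of (a q.e.-version of) $\bdry V$ together with the ``boundary'' coming from $E'$, one checks $w \le av$: on $\bdry V$ by hypothesis and $v \ge 0$ is not enough — rather, one must use that on $\bdry V$, $av$ equals $a(1-u^V_{E'})$ and as one approaches $\bdry V$ from inside $V$, $u^V_{E'}\to 0$ appropriately so $av \to a \ge w$; and on the $E'$-part both sides are $\le 0$ in the right sense. Then the comparison principle for $p$-harmonic functions (\cite[Theorem~9.39 or similar]{MR2867756}) applied to the $p$-harmonic functions $w$ and $av$ on the open set $V \setm E'$ yields $w \le av$ throughout $V \setm E'$, and since $w \le 0 \le av$ fails—wait, on $E'$ both are $\le 0$—one extends the inequality to all of $V$ using that $w \le 0$ q.e.\ on $E'$ while $av \le 0$ there too, so more care is needed; the clean statement is $1 - u^\Dom_E \le a(1-u^V_{E'})$ q.e.\ on $V$, which suffices since both sides are lsc-regularized (or one argues at every point via the essliminf definition).

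The main obstacle I anticipate is handling the boundary behaviour on $\bdry V \cap \Dom$ rigorously: the inequality $1 - u^\Dom_E \le a$ is assumed on $\bdry V$, but to feed this into the comparison principle one needs it as a limit from inside $V$ in the Sobolev/Newtonian sense, i.e.\ that $\min\{av - w, 0\} \in \Np_0(V \setm E')$ or equivalently that the positive part of $w - av$ vanishes appropriately on the boundary. I would handle this by testing with the admissible function $\psi := \max\{u^\Dom_E + a(1-u^V_{E'}) - 1,\, 0\}$, showing it lies in $\Np_0(V)$ (using that it vanishes near $\bdry V$ by the hypothesis and near $E'$ because both potentials are $1$ there), and then using the minimizing/variational property of $u^\Dom_E$ — or more directly, plugging a suitable convex combination into the weak Euler–Lagrange type inequality that the capacitary potential satisfies — to conclude $\psi \equiv 0$. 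This is the standard route for such comparison-principle variants in the metric setting, and the nonlinearity is absorbed by the fact that the competitor $1 - a(1-u^V_{E'})$ agrees with $u^\Dom_E$ on $\bdry V$ and dominates it where they differ only through the single constant factor $a$.
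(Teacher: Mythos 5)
Your plan is genuinely different from the paper's. The paper first applies the minimum principle to deduce $u^\Om_E \ge 1-a$ in $V$, so that $v := (u^\Om_E - 1) + a \ge 0$ there, then identifies $v$ as the lsc-regularized solution of the obstacle problem in $V$ with obstacle $a\chi_E$ and its own boundary data, and finally invokes the comparison principle for obstacle problems (\cite[Lemma~8.30]{MR2867756}) against $au^V_{E'}$, which solves the obstacle problem with the smaller obstacle $a\chi_{E'}$ and boundary data $0$. You instead propose a direct comparison of $w=1-u^\Om_E$ and $av=a(1-u^V_{E'})$ on $V\setm E'$ via $p$-harmonicity. That route could in principle be made to work, but as stated it has a concrete flaw: $u^\Om_E$ is $p$-harmonic only in $\Om\setm E$, and $V\setm E'$ need not be contained in $\Om\setm E$, since the hypotheses give only $E'\subset E$ and say nothing about $E\cap V$. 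So the assertion that $w$ is $p$-harmonic in $V\setm E'$ (``hence in $V\setm E'$ once we note $E'\subset V$'') is false whenever $E\cap V$ is strictly larger than $E'$. In $V\setm E'$, $u^\Om_E$ is merely a superminimizer, so $w$ is only a subminimizer; to save your argument you would have to switch to a comparison between a subminimizer and a $p$-harmonic function and then verify the boundary inequality $w\le av$ on $\partial(V\setm E')$ in the Newtonian sense, which is precisely the bookkeeping that the paper's obstacle-problem framing sidesteps (the obstacle $a\chi_E$ in $V$ automatically absorbs the excess $E\cap V\setm E'$).

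The variational fallback you sketch at the end also has the sign reversed: if $\psi:=\max\{u^\Om_E + a(1-u^V_{E'}) - 1,\, 0\}$ vanished identically, you would conclude $1-u^\Om_E \ge a(1-u^V_{E'})$, which is the opposite of what is claimed; the function you actually need to annihilate is $\max\{(1-a)+au^V_{E'}-u^\Om_E,\,0\}$. Beyond these two specific errors, the proposal never commits to which comparison principle is being used or for which pair of sub- or superminimizers, and the hedging (``more care is needed'', ``this is the subtlety'') is where the real work lies. The paper's argument is both shorter and sharper: one application of the minimum principle to get $v\ge0$, one identification with an obstacle problem, and one application of the obstacle comparison lemma.
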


\begin{proof}
By the minimum principle,
$u^\Dom_E\ge 1-a$ in $V$.
Hence
\[
v:= (u^\Dom_E-1)+a\ge0 \quad \text{in } V,
\]
and it is easily verified that $v$ is the lower semicontinuously
regularized 
solution of the obstacle problem
(see \cite[Definition~7.1]{MR2867756})
in $V$ with the obstacle $a\chi_E$ and boundary data $v\ge0$
on $\bdry V$.
Applying the comparison principle (\cite[Lemma~8.30]{MR2867756}) to $v$
and $au^V_{E'}$ 
shows that
\[
(u^\Dom_E-1)+a = v \ge a u^V_{E'} \qtext{in } V,
\]
from which the lemma follows.
\end{proof}

For an open set $U$
we let $\disU(x)=\dist(x,X \setm U)$
and define the \emph{$\ve$-interior} $U_\ve$  of $U$ 
by 
\begin{equation} \label{eq:U_eps}
U_\ve=\{x\in U: \disU(x)> \ve\}.
\end{equation}
Iterating \reflem{uE>}, we obtain the following lemma.

\begin{lem}\label{lem:hmp<onUk}
Let $U\subset\Dom$ be open, $0<\eta<1$,
and $r>0$.
Suppose that $E\subset U$ satisfies
\begin{equation}\label{eq:C/C>eta}
\frac{\capapED{E\cap B(x,r)}{B(x,\Lambda r)}}{\capapED{B(x,2r)}{B(x,\Lambda r)}}
\ge \eta
\qtext{for  
every }x\in U_{\Lambda r}.
\end{equation}
If $k$ is a positive integer and $U_{k\Lambda r}\ne\emptyset$, 
then
\begin{equation}\label{eq:hmp<()k}
1-u_E^\Dom\le (1-C_0\eta^{1/(p-1)})^k
\qtext{in $U_{k\Lambda r}$,}
\end{equation}
where $0 < C_0 \le 1$ is as in Lemma~\ref{lem:uE>}.
\end{lem}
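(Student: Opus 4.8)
The plan is to induct on $k$, using \reflem{uE>} together with the comparison principle \reflem{comp-princ-cap-pot} to pass from level $k$ to level $k+1$. The base case $k=1$ should follow almost directly from \reflem{uE>}: given $x\in U_{\Lambda r}$, the ball $B(x,\Lambda r)$ lies in $U\subset\Dom$, so by the comparison principle for capacitary potentials (the set $E\cap B(x,r)$ sits inside $E$, and $\Dom\supset B(x,\Lambda r)$), the potential $u_E^\Dom$ dominates the capacitary potential $u_{E\cap B(x,r)}^{B(x,\Lambda r)}$ on $B(x,\Lambda r)$. Hence by \reflem{uE>} and hypothesis \refeq{C/C>eta},
\[
\inf_{B(x,2r)}u_E^\Dom\ge\inf_{B(x,2r)}u_{E\cap B(x,r)}^{B(x,\Lambda r)}
\ge C_0\left(\frac{\capapED{E\cap B(x,r)}{B(x,\Lambda r)}}{\capapED{B(x,2r)}{B(x,\Lambda r)}}\right)^{1/(p-1)}
\ge C_0\eta^{1/(p-1)}.
\]
Since every point of $U_{\Lambda r}$ is covered by such a ball $B(x,2r)$, this gives $1-u_E^\Dom\le 1-C_0\eta^{1/(p-1)}$ on $U_{\Lambda r}$, which is \refeq{hmp<()k} for $k=1$.

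For the inductive step, assume \refeq{hmp<()k} holds with exponent $k$ on $U_{k\Lambda r}$ and that $U_{(k+1)\Lambda r}\ne\emptyset$. Fix $x\in U_{(k+1)\Lambda r}$; then $\disU(x)>(k+1)\Lambda r$, so the ball $V:=B(x,\Lambda r)$ satisfies $\itoverline{V}\subset U_{k\Lambda r}$, and in particular $\bdry V\subset U_{k\Lambda r}$, where by the inductive hypothesis $1-u_E^\Dom\le a:=(1-C_0\eta^{1/(p-1)})^k$. Apply \reflem{comp-princ-cap-pot} with this $V$, with $E'=E\cap B(x,r)\subset V$ (note $x\in U_{\Lambda r}$ so \refeq{C/C>eta} applies), and with $a$ as above: it gives
\[
1-u_E^\Dom\le a\bigl(1-u_{E\cap B(x,r)}^{B(x,\Lambda r)}\bigr)\qtext{in }V.
\]
Now on $B(x,2r)\subset V$ we use \reflem{uE>} exactly as in the base case to get $1-u_{E\cap B(x,r)}^{B(x,\Lambda r)}\le 1-C_0\eta^{1/(p-1)}$, so $1-u_E^\Dom\le a(1-C_0\eta^{1/(p-1)})=(1-C_0\eta^{1/(p-1)})^{k+1}$ at the point $x$ (indeed on all of $B(x,2r)$). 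Letting $x$ range over $U_{(k+1)\Lambda r}$ completes the induction.

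The main point requiring care is bookkeeping with the $\ve$-interiors: one must check that $x\in U_{(k+1)\Lambda r}$ forces $\itoverline{B(x,\Lambda r)}\subset U_{k\Lambda r}$ (so that the inductive bound is available on all of $\bdry V$, not merely q.e.), and that $x\in U_{(k+1)\Lambda r}\subset U_{\Lambda r}$ so that the density hypothesis \refeq{C/C>eta} is in force at $x$. One should also confirm the finiteness conditions needed to invoke \reflem{comp-princ-cap-pot} and \reflem{uE>} — namely $\cp(E\cap B(x,r),B(x,\Lambda r))<\infty$, which holds since $\capapED{B(x,2r)}{B(x,\Lambda r)}<\infty$ by \refeq{cap-est-0} and the monotonicity of capacity — and that the comparison principle genuinely yields $u_E^\Dom\ge u_{E\cap B(x,r)}^{B(x,\Lambda r)}$ on $B(x,\Lambda r)$, which is the standard monotonicity of capacitary potentials under enlarging both the set and the domain (here again \reflem{comp-princ-cap-pot} with $a=1$, or a direct application of \cite[Lemma~8.30]{MR2867756}). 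None of these is deep; the only genuinely substantive input is \reflem{uE>}, which is quoted.
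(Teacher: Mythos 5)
Your proposal is correct and follows essentially the same route as the paper's proof: apply Lemma~\ref{lem:uE>} to $E\cap B(x,r)$ in $B(x,\Lambda r)$ to get the one-step estimate, then iterate via Lemma~\ref{lem:comp-princ-cap-pot} by induction on $k$, checking at each stage that $\itoverline{B(x,\Lambda r)}\subset U_{(k-1)\Lambda r}$ so the inductive bound is available on $\bdry V$. The only cosmetic differences are that the paper records the estimate on $B(x,r)$ rather than $B(x,2r)$ and phrases the induction from $k-1$ to $k$ rather than $k$ to $k+1$.
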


\begin{proof}
Since
$0< C_0 \le 1$ we see that
$0<1-C_0\eta^{1/(p-1)}<1$. 
Take an arbitrary point $x\in 
U_{\Lambda r}$ and let $B=B(x,r)$.
By \reflem{uE>} with $E\cap B$ in place of $E$ and by \refeq{C/C>eta} we 
see that the capacitary potential $u_{E\cap B}^{\La B}$ of $E\cap B$ in $\La B$
satisfies
\begin{equation}   \label{eq-u-La-B}
1-u_{E\cap B}^{\La B}\le 1-C_0\eta^{1/(p-1)} \quad \text{in } B.
\end{equation}
We prove \refeq{hmp<()k} by induction on $k$
using \eqref{eq-u-La-B}.
Since $\disU(x)>\Lambda r$, we see that 
$\La B\subset U\subset\Dom$, and hence
by Lemma~\ref{lem:comp-princ-cap-pot} and \eqref{eq-u-La-B},
\[
1-u_E^\Dom \le 1-u_{E\cap B}^{\La B} \le 1-C_0\eta^{1/(p-1)}
\qtext{in $B$}.
\]
Since $x\in U_{\Lambda r}$ was arbitrary,
we obtain \eqref{eq:hmp<()k} for $k=1$.

Now let $k\ge2$ and assume that \refeq{hmp<()k} holds with $k-1$
in place of $k$.
Let $x\in U_{k\Lambda r}$ be arbitrary.
Another application of Lemma~\ref{lem:comp-princ-cap-pot} (with $V=\La B$
and $a=(1-C_0\eta^{1/(p-1)})^{k-1}$),
together with~\eqref{eq-u-La-B}, shows that
\[
1-u^\Om_E \le (1-C_0\eta^{1/(p-1)})^{k-1} (1-u_{E\cap B}^{\La B})
\le (1-C_0\eta^{1/(p-1)})^k
\qtext{in } B,
\]
which, due to the arbitrariness of $x\in U_{k\Lambda r}$, amounts 
to~\eqref{eq:hmp<()k}.
This completes the induction.
\end{proof}

Lemmas~\ref{lem:hmp<onUk} and~\ref{lem:uE<} (the latter with 
$U_{k\Lambda r}$ and $E\cap U$ in place of $A$ and $E$)
readily give the following lower bound for the ratio of capacities.
(For $x\in U_{\La r}$ we have $E\cap U\cap B(x,r)=E\cap B(x,r)$
so that \refeq{C/C>eta} holds with $E\cap U$ 
in place of $E$ for such $x$.)

\begin{cor}\label{cor:C/C>}
Let $U\subset\Dom$ be open,  $0<\eta<1$
and $r>0$.
Suppose that $E\subset X$ satisfies \refeq{C/C>eta}.
If $k$ is a positive integer and $U_{k\Lambda r}\ne\emptyset$, then
\[
\frac{\capapED{E\cap U}{\Dom}}{\capapED{U_{k\Lambda r}}{\Dom}}
\ge (1-(1-C_0\eta^{1/(p-1)})^k)^{p-1}.
\]
\end{cor}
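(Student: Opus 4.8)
The plan is to chain \reflem{hmp<onUk} and \reflem{uE<}, the latter applied with $A=U_{k\Lambda r}$ and with $E\cap U$ in the role of $E$; the point of the corollary is simply to convert the pointwise estimate on the capacitary potential coming from \reflem{hmp<onUk} into a bound on a ratio of capacities via \reflem{uE<}.

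First I would settle the bookkeeping around the capacities involved. The set $U_{k\Lambda r}$ is nonempty and open, so $\capapED{U_{k\Lambda r}}{\Dom}>0$; moreover it stays at distance at least $k\Lambda r$ from $X\setm U\supset X\setm\Dom$ while lying in the bounded set $\Dom$, so $\capapED{U_{k\Lambda r}}{\Dom}<\infty$ as well (a truncated distance function to $U_{k\Lambda r}$ is admissible). Consequently, if $\capapED{E\cap U}{\Dom}=\infty$ the asserted inequality is trivial, and I may therefore assume $\capapED{E\cap U}{\Dom}<\infty$ and denote by $u_{E\cap U}^\Dom$ the capacitary potential of $E\cap U$ in $\Dom$.

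Next I would check that \reflem{hmp<onUk} applies with $E\cap U$ in place of $E$. Trivially $E\cap U\subset U$; and for $x\in U_{\Lambda r}$ we have $\disU(x)>\Lambda r>r$ (as $\Lambda=100\lambda\ge100$), so $B(x,r)\subset U$ and hence $E\cap U\cap B(x,r)=E\cap B(x,r)$, which shows that \refeq{C/C>eta} continues to hold with $E\cap U$ in place of $E$ for every $x\in U_{\Lambda r}$. \reflem{hmp<onUk} then gives $1-u_{E\cap U}^\Dom\le(1-C_0\eta^{1/(p-1)})^k$ on $U_{k\Lambda r}$, that is,
\[
\inf_{U_{k\Lambda r}} u_{E\cap U}^\Dom \ge 1-(1-C_0\eta^{1/(p-1)})^k .
\]

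Finally I would invoke \reflem{uE<} with $A=U_{k\Lambda r}$ and $E\cap U$ in place of $E$ — its hypotheses $\capapED{U_{k\Lambda r}}{\Dom}>0$ and $\capapED{E\cap U}{\Dom}<\infty$ having been secured above — to obtain
\[
\inf_{U_{k\Lambda r}} u_{E\cap U}^\Dom \le \biggl(\frac{\capapED{E\cap U}{\Dom}}{\capapED{U_{k\Lambda r}}{\Dom}}\biggr)^{1/(p-1)} .
\]
Comparing the two displays (all quantities are nonnegative, and the left-hand side of the first lies in $(0,1]$) and raising to the power $p-1$ yields the claimed bound. I do not expect a genuine obstacle here; the two steps that warrant a little care are the transfer of \refeq{C/C>eta} from $E$ to $E\cap U$ — which uses only $\Lambda>1$, so that points of $U$ at depth $\Lambda r$ carry the whole ball $B(x,r)$ — and the finiteness and positivity statements needed to legitimately speak of $u_{E\cap U}^\Dom$ and to apply \reflem{uE<}.
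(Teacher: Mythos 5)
Your proof is correct and is essentially the paper's own argument: the paper presents the corollary as an immediate consequence of Lemma~\ref{lem:hmp<onUk} combined with Lemma~\ref{lem:uE<} applied to $A=U_{k\Lambda r}$ and $E\cap U$, and includes the same parenthetical observation that $E\cap U\cap B(x,r)=E\cap B(x,r)$ for $x\in U_{\Lambda r}$ to transfer \refeq{C/C>eta} to $E\cap U$. The only difference is that you spell out the routine finiteness/positivity checks that the paper leaves implicit.
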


\begin{rem} 
Results analogous to those in this section for the inner metric 
follow immediately, as seen from
the discussion in the penultimate paragraph of Section~\ref{sect-prelim}.
\end{rem}

In the next section, we shall see that, if $R$ is large, then
$\capap(\Bin(x,R-k\Lambda r),\Bin(x,\tau R))$
is close to $\capap(\Bin(x,R),\Bin(x,\tau R))$ uniformly for $x\in X$.
This property does not hold for ordinary balls.
This is the reason why
$\densin{r}{\tau}{E}$ has dichotomy
and yet
$\dens{r}{\tau}{E}$ does not.

\section{Uniform approximation of capacity from inside 
and Proof of \refthm{main}}
\label{sec:ACI}

\emph{Let $U$ be an open set and recall from~\eqref{eq:U_eps} that 
$U_\ve=\{x\in U:\disU(x)>\ve\}$
is the $\ve$-interior  of~$U$.}
We also define the \emph{$\ve$-neighborhood} of $U$ by
\begin{equation} \label{eq:U[ve]}
U[\ve]=\{x\in\Rd:\dist(x,U)<\ve\}.
\end{equation}

\medskip

The main aim of this section is to prove
\refthm{main}. In order to do so we will
show that the capacity of $U_\eps$ approximates the capacity
of $U$, under suitable assumptions on~$U$.

\begin{deff} 
Let $0<\kappa<1$ and $0\le R_1<R_2$.
We say that $U$ satisfies the \emph{interior corkscrew condition}
with parameters $\kappa$, $R_1$ and $R_2$ if
\[ 
\text{$x\in U$ and $R_1< r< R_2$}
\quad \Longrightarrow \quad
\text{$U\cap B(x,r)$ contains a ball of radius $\kappa r$.}
\] 
\end{deff}

\begin{rem}   \label{rem-Bin-corkscrew}
For $R>0$, $\Bin(x,R)$ satisfies the interior corkscrew condition
with parameters $1/2L$, $0$ and $R$.
The same is not true in general for ordinary balls, cf.\
\refprop{nodichotomy}.
\end{rem}

\begin{lem}\label{lem:Ue}
Suppose that
$U$ satisfies the interior corkscrew condition
with parameters $\kappa$, $0$ and $R_2$.
Let $0<\ve<\kappa R_2/2$.
Then:
\begin{enumerate}
\renewcommand{\theenumi}{\textup{(\roman{enumi})}}%
\renewcommand{\labelenumi}{\theenumi}%
\item
\label{ref-a}
For every $x\in U$ and $2\ve/\kappa\le r<R_2$, the set $U_\ve\cap B(x,r)$
contains a ball of radius $\kappa r/2$. In particular,
$U_\ve$ satisfies the interior corkscrew condition with parameters
$\kappa/2$, $2\ve/\kappa$ and $R_2$.

\item 
\label{ref-b}
$U\subset U_\ve[2\ve/\kappa]$.
\end{enumerate}
\end{lem}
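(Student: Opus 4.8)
The plan is to verify the two assertions of \reflem{Ue} directly from the corkscrew definition, handling part~\ref{ref-a} first since part~\ref{ref-b} follows easily from it.

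For part~\ref{ref-a}, fix $x\in U$ and $2\ve/\kappa\le r<R_2$. By the interior corkscrew condition for $U$ (with parameters $\kappa$, $0$, $R_2$), the set $U\cap B(x,r)$ contains a ball $B(y,\kappa r)$ for some $y$. The point is that this ball, shrunk by a factor of $2$, sits inside $U_\ve$: if $z\in B(y,\kappa r/2)$ then for any $w\in X\setm U$ we have $d(z,w)\ge d(y,w)-d(y,z)> \kappa r - \kappa r/2 = \kappa r/2 \ge \ve$, using $r\ge 2\ve/\kappa$. Hence $\disU(z)\ge\kappa r/2>\ve$ ... wait, one must be slightly careful: we need $\disU(z)>\ve$, and the chain above gives $d(z,w)>\kappa r/2\ge\ve$ for all $w\notin U$, so $\disU(z)\ge\kappa r/2$; since $r\ge 2\ve/\kappa$ forces $\kappa r/2\ge\ve$, we at least get $\disU(z)\ge\ve$. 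To get the strict inequality, note the inequality $d(y,w)>\kappa r$ is itself strict (as $B(y,\kappa r)\subset U$ is open), so $d(z,w)>\kappa r/2$ strictly, giving $\disU(z)\ge\kappa r/2>\ve$ when $r>2\ve/\kappa$; for the borderline $r=2\ve/\kappa$ one uses that $\disU$ is the infimum over $w\notin U$ of $d(z,w)$ and each such distance is $>\kappa r/2=\ve$, but the infimum could a priori equal $\ve$. A cleaner fix is to observe $B(y,\kappa r)\subset U$ implies $\disof{U}(y)\ge\kappa r$ (indeed $>\kappa r/2$ suffices for us after the triangle inequality), hence $B(y,\kappa r/2)\subset U_\ve$ as long as $\kappa r/2>\ve$ — and we should therefore really record the ball of radius slightly less than $\kappa r/2$, or simply note the standard convention that the corkscrew ball can be taken closed so that $\disof U(y)\ge\kappa r\ge 2\ve$ strictly exceeds $\ve$ after halving. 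In any case $B(y,\kappa r/2)\subset U_\ve\cap B(x,r)$ since $B(y,\kappa r/2)\subset B(y,\kappa r)\subset B(x,r)$. This proves the first sentence of~\ref{ref-a}; the ``in particular'' is then just the definition of the interior corkscrew condition with parameters $\kappa/2$, $2\ve/\kappa$, $R_2$ applied to $U_\ve$ (noting $2\ve/\kappa<R_2$ because $\ve<\kappa R_2/2$).

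For part~\ref{ref-b}, I must show every $x\in U$ lies within distance $2\ve/\kappa$ of $U_\ve$. Apply part~\ref{ref-a} with the specific radius $r=2\ve/\kappa$, which is admissible since $0<2\ve/\kappa<R_2$ (again using $\ve<\kappa R_2/2$): then $U_\ve\cap B(x,2\ve/\kappa)$ contains a ball of radius $\kappa r/2=\ve>0$, hence in particular is nonempty. Picking any point $z$ in it gives $z\in U_\ve$ with $d(x,z)<2\ve/\kappa$, so $\dist(x,U_\ve)<2\ve/\kappa$, i.e.\ $x\in U_\ve[2\ve/\kappa]$.

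The only subtlety — and the one step worth being careful about — is the strictness of the inequality $\disU(z)>\ve$ versus $\disU(z)\ge\ve$ in part~\ref{ref-a}, which is why I would phrase the corkscrew ball as a closed ball (or equivalently note $\disof U(y)\ge\kappa r$ with $\kappa r>2\ve$, so that halving still yields a strict excess over $\ve$); everything else is a routine triangle-inequality computation.
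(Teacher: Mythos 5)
Your proof is correct and follows essentially the same route as the paper's. The strictness worry is a red herring: since $z$ lies in the \emph{open} ball $B(y,\kappa r/2)$ we have $d(y,z)<\kappa r/2$ strictly, and the $1$-Lipschitz property of $\disU$ gives $\disU(z)\ge\disU(y)-d(y,z)>\kappa r-\kappa r/2=\kappa r/2\ge\ve$, so $z\in U_\ve$ even in the borderline case $r=2\ve/\kappa$ (the paper sidesteps the issue slightly differently by estimating $\disof{U_\ve}(y)\ge\disU(y)-\ve\ge\kappa r/2$), and neither a closed-ball convention nor shrinking the radius is needed.
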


\begin{proof}
\ref{ref-a}
Let $x\in U$ and $2\ve/\kappa\le r<R_2$.
By hypothesis there is a ball $B(y,\kappa r)\subset U\cap B(x,r)$.
This means that $\disU(y)\ge\kappa r\ge2\ve$, so that
\[
\disof{U_\ve}(y)
\ge\disU(y)-\ve
\ge \kappa r -\frac{\kappa r}2
=\frac{\kappa r}2.
\]
Hence $B(y,\kappa r/2)\subset U_\ve\cap B(x,r)$.

\ref{ref-b}
Let $x\in U$ and apply (i) with $r=2\ve/\kappa$.
We find a ball $B(y,\ve)\subset U_\ve\cap B(x,2\ve/\kappa)$.
Then $y\in U_\ve$ and $d(x,y)<2\ve/\kappa$, so that
$x\in U_\ve[2\ve/\kappa]$.
\end{proof}

\begin{lem}\label{lem:U2-j}
Suppose that
$U \subset \Om$ satisfies the interior corkscrew condition
with parameters $\kappa$, $R_1$ and  
$R_2\le\dist(U,X \setm \Om)$.
If $j\ge1$ and $R_2/\Lambda^{j}>R_1$, then
\[
\frac{\capapED{U[R_2/\Lambda^{j}]}{\Dom}}{\capapED{U}{\Dom}}
\le(1-\eta_\kappa^j)^{1-p},
\]
where $0<\eta_\kappa<1$ depends only on $\kappa$, $p$ 
and  
$X$.
\end{lem}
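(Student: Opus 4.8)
My plan is to reduce the estimate to a pointwise lower bound for the capacitary potential $u_U^\Dom$ of $U$ in $\Dom$ on the neighbourhood $U[r]$, $r:=R_2/\Lambda^j$, and then invoke the superlevel set identity of \refprop{capapEM}. We may assume $U\neq\emptyset$; then $U$ is a nonempty open set with $\dist(U,X\setminus\Dom)\ge R_2>0$, so $0<\capapED{U}{\Dom}<\infty$, and the asserted inequality amounts to $\capapED{U[r]}{\Dom}\le(1-\eta_\kappa^j)^{1-p}\capapED{U}{\Dom}$ for a constant $\eta_\kappa=\eta_\kappa(\kappa,p,X)\in(0,1)$ to be specified. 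Recall that $u_U^\Dom\equiv1$ on the open set $U$. The plan is to establish
\begin{equation}\label{eq:target}
u_U^\Dom\ge 1-\eta_\kappa^j\qquad\text{on }U[r],
\end{equation}
after which \refprop{capapEM}, applied with $E=U$ and $M=1-\eta_\kappa^j\in(0,1]$, gives $\capapED{\{x\in\Dom:u_U^\Dom(x)\ge M\}}{\Dom}=M^{1-p}\capapED{U}{\Dom}$; since $U[r]\subset\{u_U^\Dom\ge M\}$ by \refeq{target}, monotonicity of the variational capacity completes the argument.

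To obtain \refeq{target} I would run a finite induction over $i=0,1,\dots,j$ on the assertion ``$1-u_U^\Dom\le\eta_\kappa^i$ everywhere on $U[R_2/\Lambda^{i}]$''; the case $i=0$ is trivial and $i=j$ is \refeq{target}. The engine is the following one-scale estimate, which I would prove (by one and the same argument) for every $\rho=R_2/\Lambda^{i}$ with $0\le i\le j-1$:
\begin{equation}\label{eq:onescale}
u_U^{U[\rho]}\ge\eta_*\quad\text{on }U[\rho/\Lambda],\qquad\text{where }\eta_*=\eta_*(\kappa,p,X)\in(0,1),
\end{equation}
$u_U^{U[\rho]}$ being the capacitary potential of $U$ in the open set $U[\rho]\subset\Dom$ (and $\capapED{U}{U[\rho]}<\infty$). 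To prove \refeq{onescale} I would fix $x_0\in U[\rho/\Lambda]$ and pick $u_0\in U$ with $d(x_0,u_0)<\rho/\Lambda$. Since $\rho/\Lambda=R_2/\Lambda^{i+1}\in(R_1,R_2)$ — here one uses $R_2/\Lambda^{i+1}\ge R_2/\Lambda^{j}=r>R_1$ — the interior corkscrew condition yields a ball $B(y_0,\kappa\rho/\Lambda)\subset U\cap B(u_0,\rho/\Lambda)$. Applying \reflem{uE>} with centre $u_0$ and radius $\rho/\Lambda$ to the set $B(y_0,\kappa\rho/\Lambda)\subset\itoverline{B(u_0,\rho/\Lambda)}$, and using \refeq{cap-est-0} together with the doubling property to see that $\capapED{B(y_0,\kappa\rho/\Lambda)}{B(u_0,\rho)}$ and $\capapED{B(u_0,2\rho/\Lambda)}{B(u_0,\rho)}$ are comparable with constant depending only on $\kappa,p,X$ (both being $\simeq(\rho/\Lambda)^{-p}\mu(B(u_0,\rho/\Lambda))$), I would get $\inf_{B(u_0,2\rho/\Lambda)}u_{B(y_0,\kappa\rho/\Lambda)}^{B(u_0,\rho)}\ge\eta_*$. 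Then \reflem{comp-princ-cap-pot}, applied with $U[\rho]$ in the role of $\Dom$, $V=B(u_0,\rho)\subset U[\rho]$, $E=U$, $E'=B(y_0,\kappa\rho/\Lambda)$ and $a=1$, shows $u_U^{U[\rho]}\ge u_{B(y_0,\kappa\rho/\Lambda)}^{B(u_0,\rho)}$ in $B(u_0,\rho)$; evaluating at $x_0\in B(u_0,\rho/\Lambda)\subset B(u_0,2\rho/\Lambda)$ gives \refeq{onescale}.

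For the inductive step from stage $i$ to stage $i+1$ ($0\le i\le j-1$) I would put $\rho=R_2/\Lambda^{i}$ and $\eta_\kappa:=1-\eta_*\in(0,1)$. As $u_U^\Dom$ is lower semicontinuous in $\Dom$, the set $\{u_U^\Dom\ge 1-\eta_\kappa^{i}\}$ is relatively closed in $\Dom$, and by the inductive hypothesis it contains the open set $U[\rho]$; hence it contains $\partial U[\rho]$ as well, i.e.\ $1-u_U^\Dom\le\eta_\kappa^{i}$ on $\partial U[\rho]$. Now \reflem{comp-princ-cap-pot}, applied with ambient $\Dom$, $V=U[\rho]$, $E=E'=U$ and $a=\eta_\kappa^{i}$ (legitimate since $U[\rho]\subset\Dom$, $\capapED{U}{\Dom}<\infty$ and $\capapED{U}{U[\rho]}<\infty$), gives $1-u_U^\Dom\le\eta_\kappa^{i}\bigl(1-u_U^{U[\rho]}\bigr)$ in $U[\rho]$; combining with \refeq{onescale} on $U[\rho/\Lambda]\subset U[\rho]$ yields $1-u_U^\Dom\le\eta_\kappa^{i}(1-\eta_*)=\eta_\kappa^{i+1}$ everywhere on $U[\rho/\Lambda]=U[R_2/\Lambda^{i+1}]$, which is stage $i+1$. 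After $j$ such steps we obtain \refeq{target}, and the lemma follows.

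I expect the main obstacle to be the geometric bookkeeping inside \refeq{onescale}: the corkscrew ball must be taken at scale $\rho/\Lambda$, so that it already lies inside $B(u_0,\rho/\Lambda)$, whose $\Lambda$-dilate is precisely the ``ambient'' ball $B(u_0,\rho)$, while $B(u_0,\rho)$ must remain inside $U[\rho]$ and the corkscrew condition may be invoked only at scales exceeding $R_1$. This is what forces the induction to advance one power of $\Lambda$ per step and to terminate at $i=j$; the hypothesis $R_2/\Lambda^{j}>R_1$ is used exactly to make the final step (corkscrew at scale $r$) admissible. A secondary technical point is the passage from ``quasieverywhere'' to ``everywhere'' for the inductive bounds, which is handled via the lower semicontinuity of the capacitary potentials and the fact that $u_U^\Dom\equiv1$ on the open set~$U$.
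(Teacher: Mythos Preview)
Your proof is correct and follows essentially the same strategy as the paper: obtain $u_U^\Dom\ge 1-\eta_\kappa^j$ on $U[R_2/\Lambda^j]$ by iterating \reflem{comp-princ-cap-pot} together with the corkscrew-based one-scale estimate from \reflem{uE>}, then conclude via \refprop{capapEM}. The only difference is organizational: the paper fixes $x\in U$ and iterates over nested balls $B(x,R_2/\Lambda^i)$ (comparing $u_U^\Dom$ directly with ball potentials), whereas you iterate globally over the neighbourhoods $U[R_2/\Lambda^i]$ via the auxiliary potentials $u_U^{U[\rho]}$; this is a minor repackaging, not a different route.
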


\begin{proof}
Let $x\in U$ be arbitrary.
In view of \reflem{capball},
we find $0<\eta<1$ depending only on 
$\kappa$, $p$ and  
$X$ such that
if $x\in U$, then 
\[
\frac{\capapED{U\cap B(x,r)}{B(x,\Lambda r)}}
{\capapED{B(x,2r)}{B(x,\Lambda r)}}\ge \eta
\qtext{for all } R_1<r<R_2.
\]
This, together with 
\reflem{uE>}, yields for $R_1<r<R_2$,
\begin{equation}\label{eq:k0}
1-u_r \le 1-C_0\eta^{1/(p-1)} =: \eta_\kappa \qtext{in }B(x,r),
\end{equation}
where $u_r$ is the capacitary potential of $U\cap B(x,r)$ in
$B(x,\Lambda r)$ and 
$0<\eta_\kappa<1$ depends only on $\kappa$, $p$ and $X$.
Let $u_{U}$ be the capacitary potential of $U$ in $\Dom$.
We shall show that \eqref{eq:k0} implies
\begin{equation}\label{eq:uU-le-1-eta}
1-u_U \le \eta_\kappa^j
\quad \text{in } B(x,R_2/\Lambda^j)
\end{equation}
whenever $R_2/\Lambda^{j}>R_1$. 
To start with, note that \reflem{comp-princ-cap-pot}
(with $V=B(x,R_2/\La)$ and $a=1$)
and \refeq{k0} imply
\[
1-u_U \le 1-u_{R_2/\La} \le \eta_\kappa
\quad \text{in } B(x,R_2/\La),
\]
i.e., \refeq{uU-le-1-eta} holds for $j=1$.
Now let $j\ge 2$ and assume that
\refeq{uU-le-1-eta} holds with $j$ 
replaced by $j-1$.
As $R_2/\Lambda^{j}>R_1$, we know that 
\refeq{k0} holds for $r=R_2/\Lambda^{j}$.
Now, 
applying \reflem{comp-princ-cap-pot} with $V=B(x,R_2/\Lambda^{j-1})$
and $a=\eta_\kappa^{j-1}$,
yields
\[
1-u_U \le \eta_\kappa^{j-1} (1-u_{R_2/\Lambda^j}) \le \eta_\kappa^j
\quad \text{in } B(x,R_2/\Lambda^{j}),
\]
which proves \refeq{uU-le-1-eta} also for $j$.
Since $x\in U$ was arbitrary, we conclude that
\[
U[R_2/\Lambda^{j}]\subset\{x\in\Dom:u_U(x)\ge 1-\eta_\kappa^j\}.
\]
Hence \refprop{capapEM} yields the required inequality.
\end{proof}

By Lemmas~\ref{lem:Ue} and~\ref{lem:U2-j} with $U_\ve$ in place of $U$ 
we immediately obtain the following approximation of capacity from inside.

\begin{lem}\label{lem:approx.in}
Suppose that
$U \subset \Om$ satisfies the interior corkscrew condition 
with parameters $\kappa$, $0$ and $R_2\le \dist(U,X \setm \Om)$.
Let $0<\eta_{\kappa/2}<1$ be the constant in \reflem{U2-j} 
corresponding to $\kappa/2$.
If $j\ge1$ and $\eps \le \kappa R_2/2\Lambda^j  $,
then
\[
\frac{\capapED{U}{\Dom}}{\capapED{U_\ve}{\Dom}}
\le(1-\eta_{\kappa/2}^j)^{1-p}.
\]
\end{lem}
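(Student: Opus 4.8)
The plan is to apply \reflem{U2-j} not to $U$ directly but to its $\ve$-interior $U_\ve$, and then to transfer the resulting bound back to $U$ via the inclusion $U\subset U_\ve[2\ve/\kappa]$ together with the monotonicity of the variational capacity. The key observation is that the right-hand side of \reflem{U2-j} does not depend on the set itself, only on its corkscrew parameters, so passing from $U$ to $U_\ve$ costs us merely a halving of the corkscrew constant ($\kappa\to\kappa/2$) and the loss of the smallest scales, both of which are harmless for the scales $R_2/\Lambda^j$ we care about.

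First I would check that \reflem{Ue} applies: since $j\ge1$ and $\Lambda^j>1$, the hypothesis $\ve\le\kappa R_2/2\Lambda^j$ forces $0<\ve<\kappa R_2/2$. Then \reflem{Ue}\ref{ref-a} shows that $U_\ve$ satisfies the interior corkscrew condition with parameters $\kappa/2$, $2\ve/\kappa$ and $R_2$, while \reflem{Ue}\ref{ref-b} gives $U\subset U_\ve[2\ve/\kappa]$. Next I would invoke \reflem{U2-j} with $U_\ve$, $\kappa/2$, $2\ve/\kappa$ and $R_2$ in the roles of $U$, $\kappa$, $R_1$ and $R_2$; this is legitimate because $U_\ve\subset U\subset\Dom$, so $R_2\le\dist(U,X\setm\Dom)\le\dist(U_\ve,X\setm\Dom)$, and because the hypothesis on $\ve$ reads precisely $2\ve/\kappa\le R_2/\Lambda^j$. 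This yields
\[
\frac{\capapED{U_\ve[R_2/\Lambda^j]}{\Dom}}{\capapED{U_\ve}{\Dom}}\le\bigl(1-\eta_{\kappa/2}^j\bigr)^{1-p},
\]
with $\eta_{\kappa/2}$ the constant from \reflem{U2-j} corresponding to $\kappa/2$. Finally, since $2\ve/\kappa\le R_2/\Lambda^j$ the $\ve$-neighborhoods are nested, $U_\ve[2\ve/\kappa]\subset U_\ve[R_2/\Lambda^j]$, so combining this with $U\subset U_\ve[2\ve/\kappa]$ and the monotonicity of $\capapED{\cdot}{\Dom}$ gives $\capapED{U}{\Dom}\le\capapED{U_\ve[R_2/\Lambda^j]}{\Dom}$, and the asserted inequality follows at once.

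There is no genuine obstacle here, only a bit of parameter bookkeeping; the one place that deserves a word of care is the borderline case $2\ve/\kappa=R_2/\Lambda^j$, where the strict inequality $R_2/\Lambda^j>R_1$ demanded in the statement of \reflem{U2-j} is not literally met. This is harmless: \reflem{Ue}\ref{ref-a} in fact produces the corkscrew ball for $U_\ve$ at every radius $r\ge2\ve/\kappa$ (and not merely for $r>2\ve/\kappa$), whereas the proof of \reflem{U2-j} uses the corkscrew property only at the radii $R_2/\Lambda^i$ for $1\le i\le j$, all of which lie in $[2\ve/\kappa,R_2)$; hence the induction in that proof goes through verbatim even in the borderline case.
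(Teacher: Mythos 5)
Your proposal is correct and follows exactly the route the paper takes: the paper's proof is the single sentence that Lemmas~\ref{lem:Ue} and~\ref{lem:U2-j} applied with $U_\ve$ in place of $U$ give the result, and you have simply written out the parameter bookkeeping that this entails. Your remark on the borderline case $2\ve/\kappa = R_2/\Lambda^j$ is a valid and correctly resolved technical point that the paper silently passes over.
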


\begin{proof}[Proof of \refthm{main}]
In view of \reflem{dens=densin},
it is sufficient to show that if $\dens{r}{\La}{E}>0$ 
for some $r>0$,  
then $\lim_{R\to\infty}\densin{R}{\tau}{E}=1$.
Note that $\dens{r}{\La}{E}>0$ implies \refeq{C/C>eta} 
for some $0<\eta<1$.
Take an arbitrary positive number $\alpha<1$ and
find a positive integer $k$ such that
\[
(1-(1-C_0\eta^{1/(p-1)})^k)^{p-1}\ge\alpha,
\]
where $C_0$ is the constant
from 
Corollary~\ref{cor:C/C>}.
By Remark~\ref{rem-Bin-corkscrew}, 
$\Bin:=\Bin(x,R)$ satisfies the corkscrew condition
with parameters $\ka=1/2L$, $0$ and $R_2=\min\{1,\tau-1\}R$.
Corollary~\ref{cor:C/C>}, together with
Lemma~\ref{lem:approx.in} (and $U=\Bin$, $\Om=\tau\Bin=\Bin(x,\tau R)$
and $\eps=k\La r$), then implies that
\[
\densin{R}{\tau}{E} = \inf_{x \in X}
\frac{\cp(E\cap\Bin,\tau\Bin)}
{\cp((\Bin)_{k\La r},\tau\Bin)} \,
\frac{\cp((\Bin)_{k\La r},\tau\Bin)}{\capap(\Bin,\tau\Bin)}
\ge \alpha (1-\eta_{\kappa/2}^j)^{p-1},
\]
where 
$j$ is the maximal integer such that
\[
  k\La r\le \frac{\ka\min\{1,\tau-1\}R}{2\La^j}.
\]
Letting $R\to\infty$ (and thus $j \to \infty$)
and then $\alpha \to 1$ shows that
$\lim_{R\to\infty} \densin{R}{\tau}{E}=1$,
since clearly $\densin{R}{\tau}{E} \le 1$ for all $R>0$.
\end{proof}

\begin{proof}[Proof of \refthm{main-weak}]
This follows directly from \refthm{main} and \reflem{dens=densin}.
\end{proof}

\section{Counterexamples and proof of \refthm{nodicho}}
\label{sect-counterex}

In this section we shall first
construct an example $(X,d,\mu)$ for which
the dichotomy for ordinary balls does not hold.
Let $B^+(x,r)=\{y\in B(x,r):y^\di>x^\di\}$ with $x=(x^1,\dots,x^\di)\in \R^n$.
This is the open upper half ball in $\R^n$ with center at $x$ and radius $r$.
The half-open lower half ball is denoted by 
$B^-(x,r):=B(x,r) \setm B^+(x,r)$.

Let $x_j=(4^j,0,\dots,0)$ and $R_j=2^j$, $j=1,2,\ldots$.
Let $X=\Rd\sm \union_{j=1}^\infty B^+(x_j,R_j)$ and let $d(x,y)$
be the restriction of the Euclidean distance to $X$.
We write $\BX(x,r)=\{y\in X:d(x,y)<r\}$ for the open ball 
with center at $x$ and radius $r$ in $X$ with respect to $d(x,y)$.
Observe that $\BX(x,r)=B(x,r)\cap X$ with $B(x,r)$ being the Euclidean ball
with center at $x$ and radius $r$.
Let $\mu$ be the restriction of $\di$-dimensional Lebesgue measure on $X$.
Then $\mu$ is doubling on $X$.
Moreover, $X$ is the closure of a uniform domain in
$\R^n$ and hence supports a $1$-Poincar\'e
inequality, by \cite[Theorem~4.4]{BjShJMAA} and \cite[Proposition~7.1]{AS05}.
We will denote the variational capacities
with respect to $X$ and $\R^n$ by $\cp$
and $\capapRn$, respectively.

\begin{prop} \label{prop:nodichotomy}
Let  $1<p<\di$ and $\tau>1$.
In the situation described above
the following assertions hold true:
\begin{enumerate}
\renewcommand{\theenumi}{\textup{(\roman{enumi})}}%
\renewcommand{\labelenumi}{\theenumi}%
\item \label{k-unif-appr}
The balls $\BX(x,r)$
fail the 
uniform approximation of capacity.
More precisely, if $\rho>0$,
then $R_j/(R_j+\rho)\increase1$, as $j \to \infty$, and yet
for $2^j\ge 4\max\{\tau,\rho\}$,
\[
\frac
{\capapED{\BX(x_j,R_j)}{\BX(x_j,\tau(R_j+\rho))}}
{\capapED{\BX(x_j,R_j+\rho)}{\BX(x_j,\tau(R_j+\rho))}}
\le C <1,
\]
where $C$ is independent of $\rho$. 

\item \label{k-dich}
No dichotomy property, with respect to the balls $B_X(x,r)$, holds for
the set 
\begin{equation}\label{eq:E=BzdH}
E = \bigcup_{z\in\Z^\di \setm H} B(z,\de),
\quad \text{where } 0 < \de \le \tfrac{1}{4}, \quad
H=\bigcup_{j=1}^\infty B^+(x_j,R_j)\bigl[\tfrac12\bigr] 
\end{equation}
and $B^+(x_j,R_j)\bigl[\tfrac12\bigr]$ is the
  $\tfrac12$-neighborhood of $B^+(x_j,R_j)$,
here taken with respect to $\R^n$,
  see \eqref{eq:U[ve]} and Figure~\ref{fig:nodicho}.
More precisely,
\begin{enumerate}
\renewcommand{\theenumii}{\textup{(\alph{enumii})}}%
\renewcommand{\labelenumii}{\theenumii}%
\item  
$\dens{2\sqrt\di}{\tau}{E}>0$,
\item 
\(
0<\liminf_{R\to\infty} \dens{R}{\tau}{E}<1.
\)
\end{enumerate}
\end{enumerate}
\end{prop}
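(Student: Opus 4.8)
The plan is to handle the two parts separately, with part~\ref{k-unif-appr} supplying the geometric mechanism and part~\ref{k-dich} converting it into a genuine failure of dichotomy.

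For part~\ref{k-unif-appr}, the idea is that the ball $\BX(x_j,R_j)$ is essentially a lower half-ball (its upper half is carved away near $x_j$ by the removed set $B^+(x_j,R_j)$, since $R_j=2^j$ fits inside the gap $4^{j+1}-4^j$ for large $j$), so enlarging its radius from $R_j$ to $R_j+\rho$ adds only a thin spherical shell \emph{of the lower half}, and does not fill in the missing upper half. Concretely, I would first observe that for $2^j$ large the sets $\BX(x_j,\tau(R_j+\rho))$ lie in the region where only the single obstacle $B^+(x_j,R_j)$ is relevant, so $\BX(x_j,s)=B(x_j,s)\setminus B^+(x_j,R_j)$ for all $s\le \tau(R_j+\rho)$. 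Then $\BX(x_j,R_j)=B^-(x_j,R_j)$ while $\BX(x_j,R_j+\rho)\supset B^-(x_j,R_j)$ together with a full upper half-shell $B^+(x_j,R_j+\rho)\setminus B^+(x_j,R_j)$. Using \reflem{capball} (the comparison \refeq{cap-est-0}, valid since $X$ is doubling and supports a Poincar\'e inequality) together with monotonicity and subadditivity of the variational capacity, the numerator is comparable to $(R_j+\rho)^{-p}\mu(B(x_j,R_j+\rho))$ restricted to the lower half, while the denominator picks up in addition the capacity of the half-shell, which by a direct test-function estimate (take the function that is $1$ on $B(x_j,R_j)$, $0$ outside $B(x_j,\tau(R_j+\rho))$, linear in between in the upper half) is comparable to the same quantity. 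Thus the ratio is bounded above by a constant $C<1$ depending only on $n$, $p$, $\tau$ (and crucially not on $\rho$, because both the gained shell and the base scale the same way). The main obstacle here is bookkeeping: making the "only one obstacle matters" reduction precise and getting the constant genuinely independent of $\rho$; this is where one must be careful that $\rho$ can be large relative to $R_j$ only when $R_j$ is itself large, so the condition $2^j\ge 4\max\{\tau,\rho\}$ keeps everything in the single-obstacle regime.

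For part~\ref{k-dich}, sub-part~(a) is a soft lower bound: the lattice-ball set $E$ contains $B(z,\de)$ for every $z\in\Z^n\setminus H$, and since $H$ is a union of bounded slabs of uniformly bounded "thickness", every ball $B(x,2\sqrt n)$ in $X$ contains at least one such $z$ at distance comparable to $1$; hence $\capap(E\cap B(x,2\sqrt n),B(x,2\sqrt n\,\tau))\gtrsim \capap(B(z,\de),B(z,C))\gtrsim 1\simeq\capap(B(x,2\sqrt n),B(x,2\sqrt n\,\tau))$ uniformly in $x$, giving $\dens{2\sqrt n}{\tau}{E}>0$. For sub-part~(b), the lower bound $\liminf_{R\to\infty}\dens{R}{\tau}{E}>0$ follows from (a) together with \refthm{main-weak} (the weak dichotomy): since $\dens{r}{\tau}{E}>0$ for one value of $r$, the weak-dichotomy alternative forces $\liminf_R\dens{R}{\tau}{E}\ge A>0$. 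The content is the strict upper bound $\liminf_R\dens{R}{\tau}{E}<1$: here I would evaluate the density ratio along the sequence of balls centered at (or near) $x_j$ with radius comparable to $R_j$. By part~\ref{k-unif-appr}'s mechanism, $\BX(x_j,R_j)$ is a lower half-ball, so the competitor (capacitary potential) for $\capap(E\cap \BX(x_j,R_j),\BX(x_j,\tau R_j))$ is forced to have $0$-boundary values along the flat diametral face and along the removed half-ball, and a standard reflection/test-function estimate shows this flat piece of zero boundary data costs a definite fraction of the capacity — i.e. $\capap(E\cap\BX(x_j,R_j),\BX(x_j,\tau R_j))\le C'\capap(\BX(x_j,R_j),\BX(x_j,\tau R_j))$ with $C'<1$ independent of $j$, even though $E$ is "thick" everywhere away from $H$. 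Equivalently, one compares with the half-ball capacity using \refeq{cap-est-0}. Taking the infimum over $x$ and then $\liminf$ over $R=R_j$ gives the strict inequality. The delicate point is ensuring that the dilated ball $\BX(x_j,\tau R_j)$ still only sees the single obstacle $B^+(x_j,R_j)$ and not its neighbors $x_{j\pm1}$; this is exactly guaranteed by the exponential spacing $x_j=(4^j,0,\dots,0)$ versus $R_j=2^j$, so that $\tau R_j = \tau 2^j \ll 4^{j+1}-4^j$ for $j$ large. That separation-of-scales verification, plus the quantitative "flat zero-boundary face costs a fixed fraction" estimate, is the heart of the argument.
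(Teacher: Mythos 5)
Your intuition for part (i) (comparing to half-ball versus full-ball capacities at unit scale after locating the single relevant obstacle) is correct, and part (ii)(a) is essentially the paper's argument; invoking \refthm{main-weak} for the lower bound in (ii)(b) is also legitimate (the paper instead uses \reflem{dens=densin} and \refthm{main}, which amounts to the same thing). However, there is a genuine gap in your argument for the upper bound in (ii)(b).

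You propose to test the density at radius $R_j$, i.e.\ with the ball $\BX(x_j,R_j)=B^-(x_j,R_j)$, and claim that $\capap(E\cap\BX(x_j,R_j),\BX(x_j,\tau R_j))\le C'\capap(\BX(x_j,R_j),\BX(x_j,\tau R_j))$ with $C'<1$ uniformly in $j$, by a ``flat zero-boundary'' heuristic. This is false, and in fact the ratio tends to $1$ as $j\to\infty$. The reason is that $B^-(x_j,R_j)$, as a subset of $X$, is a perfectly good corkscrew (indeed John) domain, and since you have already established that $E$ has uniformly positive capacity density at a fixed small scale, \refcor{C/C>} together with \reflem{approx.in} forces $\capap(E\cap B^-(x_j,R_j),\cdot)/\capap(B^-(x_j,R_j),\cdot)\to 1$. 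Your boundary-value heuristic is also off: the capacitary potential of $E\cap\BX(x_j,R_j)$ in $\BX(x_j,\tau R_j)$ is only required to vanish on $X\setm\BX(x_j,\tau R_j)$; the flat diametral disk $\bdry B^+(x_j,R_j)$ is an internal ``hole'' of $X$ where no Dirichlet condition is imposed, so nothing forces the competitor to be small there.

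The crucial device, which you are missing, is to test at radius $R_j+\delta$ rather than $R_j$. Then $\BX(x_j,R_j+\delta)$ consists of the lower half-ball together with the thin upper half-shell $B^+(x_j,R_j+\delta)\setm B^+(x_j,R_j)$. Because any admissible function for the $X$-capacity is identically $1$ on this shell, it extends by $1$ across the removed region $B^+(x_j,R_j)$ to give an admissible function in $\R^n$, so the denominator is \emph{exactly} $\capapRn(B(x_j,R_j+\delta),B(x_j,\tau(R_j+\delta)))$. On the other hand, since the lattice centers $z$ of $E$ avoid the $\tfrac12$-neighborhood of $B^+(x_j,R_j)$ and $\delta\le\tfrac14$, every ball $B(z,\delta)$ meeting $B(x_j,R_j+\delta)$ stays in the lower half, so $E\cap\BX(x_j,R_j+\delta)\subset B^-(x_j,R_j+\delta)$ and the numerator is at most $\capapRn(B^-(x_j,R_j+\delta),B(x_j,\tau(R_j+\delta)))$. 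After translation and dilation this yields the uniform bound $\capapRn(B^-(0,1),B(0,\tau))/\capapRn(B(0,1),B(0,\tau))<1$. The same extension-by-one observation, rather than subadditivity plus test functions, is what makes part (i) precise.
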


\begin{figure}[htb]
\footnotesize
\begin{overpic}[scale=.16]
{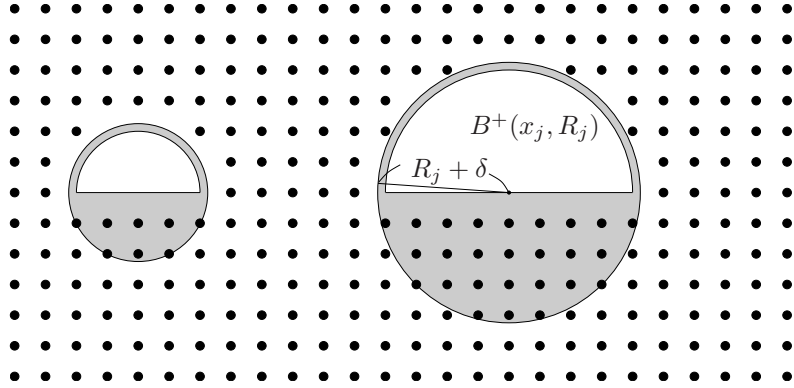}
\put(172,93){$B^+(x_j,R_j)$}
\put(150,77){$R_j+\delta$}
\end{overpic}
\caption{No dichotomy holds for $X=\Rd\sm \union_{j=1}^\infty B^+(x_j,R_j)$
with $E$ being the union of all small black balls.}
\label{fig:nodicho}
\end{figure}

\begin{proof}
From the construction, the balls
$\bigl\{B\bigl(x_j,\frac354^j\bigr)\bigr\}_{j=1}^\infty$ are pairwise disjoint.
To prove~\ref{k-unif-appr} let $\rho>0$ and 
$2^j\ge4\max\{\tau,\rho\}$.
Then
$B(x_j,\tau(R_j+\rho))\subset B\bigl(x_j,\frac354^j\bigr)$, and thus $B(x_j,\tau(R_j+\rho))$
does not intersect any of the balls 
$B(x_k,R_k)$, $k \ne j$.
Hence
\begin{align}   \label{eq:BXxjRj}
\capapED{\BX(x_j,R_j)}{\BX(x_j,\tau R_j)}
&\le \capapEDRn{B^-(x_j,R_j)}{B(x_j,\tau R_j)},
\nonumber \\
\capapED{\BX(x_j,R_j+\rho)}{\BX(x_j,\tau(R_j+\rho))}
&= \capapEDRn{B(x_j,R_j+\rho)}{B(x_j,\tau(R_j+\rho))},
\end{align}
which, together with 
translation and dilation for $\capapRn$, yields
\begin{align}
\frac{\capapED{\BX(x_j,R_j)}{\BX(x_j,\tau R_j)}}
{\capapED{\BX(x_j,R_j+\rho)}{\BX(x_j,\tau(R_j+\rho))}}
&\le
\frac{R_j^{\di-p}\capapEDRn{B^-(0,1)}{B(0,\tau)}}
{(R_j+\rho)^{\di-p}\capapEDRn{B(0,1)}{B(0,\tau)}} \nonumber \\
&\le
\frac{\capapEDRn{B^-(0,1)}{B(0,\tau)}}{\capapEDRn{B(0,1)}{B(0,\tau)}}
=:C
<1. \label{eq-cp-quot}
\end{align}
Thus~\ref{k-unif-appr} follows.

For the proof of \ref{k-dich}, let $0 < \de \le \tfrac{1}{4}$ and
note that if $x\in X$, then there exists $x'\in X\setm H$ such that
$d(x,x')\le\tfrac12$.
Now, by going at most length 1 in each of the coordinate directions, 
we can find $z\in\Z^n\cap (X\setm H)$ such that $d(x',z)\le\sqrt{n}$.
It thus follows from \reflem{capball} that
\begin{multline*}
\capap(E\cap B_{X}(x,2\sqrt\di),B_{X}(x,2\tau\sqrt\di))
\ge\capapED{B_{X}(z,\delta)}{B_{X}(z,4\tau\sqrt\di)} \\
  \ge C' \capap\bigl(\BX{(z,\delta)},{\BX\bigl(z,\tfrac{1}{2}\bigr)}\bigr)
= C' \capapRn\bigl({B(z,\delta)},{B\bigl(z,\tfrac{1}{2}\bigr)}\bigr)
\ge C'' \delta^{\di-p},
\end{multline*}
where $C'$ and $C''$ depend only on $n$, $p$ and $\tau$.
Taking infimum over $x\in X$, we obtain~(a).
It then follows from \reflem{dens=densin} and \refthm{main} that
\[
\liminf_{R\to\infty} \dens{R}{\tau}{E}
\ge {C'''}
\liminf_{R\to\infty} \densin{R}{\tau}{E}
={C'''} >0,
\]
where $C'''$ depends only on $n$, $p$ and $\tau$.
By \refeq{E=BzdH} we have
\[
E\cap\BX(x_j,R_j+\delta)\subset B^-(x_j,R_j+\delta).
\]
Moreover, if $2^j \ge 4\tau$, then 
\eqref{eq:BXxjRj} with $\rho=\delta$ yields
as in \eqref{eq-cp-quot},
\[
\frac
{\capapED{E\cap\BX(x_j,R_j+\delta)}{\BX(x_j,\tau(R_j+\delta))}}
{\capapED{\BX(x_j,R_j+\delta)}{\BX(x_j,\tau(R_j+\delta))}}
\le
\frac{\capapEDRn{B^-(0,1)}{B(0,\tau)}}{\capapEDRn{B(0,1)}{B(0,\tau)}} <1. 
\]
Hence 
\[
\dens{R_j+\delta}{\tau}{E}
\le
\frac{\capapEDRn{B^-(0,1)}{B(0,\tau)}}{\capapEDRn{B(0,1)}{B(0,\tau)}}
<1,
\]
so that
\(
\liminf_{R\to\infty} \dens{R}{\tau}{E}<1.
\)
Thus (b) is proved.
\end{proof}

The following example shows that
the Sobolev capacity $\Cp$ has no dichotomy nor a weak dichotomy similar 
to the one in \refthm{main-weak}.
Define
\[
\densCp{r}{E}
=\inf_{x\in X}
\frac{\Cp(E\cap B(x,r))}{\Cp(B(x,r))}.
\]
We are interested in the  behavior of $\densCp{r}{E}$
 as $r \to \infty$.

\begin{example} \label{exa:Cp}
Let $X=\R^n$ (unweighted) and $1<p<\infty$.
Note that $\mu(E)\le\Cp(E)$ for every measurable set $E$.  
For $B(x,r)$ and $r\ge1$ we can test the capacity 
with $u(y)=(1-\dist(y,B(x,r)))_+$, which shows that
\begin{equation}   \label{eq-Cp-Bxr}
    r^n \om_n 
    = \mu(B(x,r)) 
    \le \Cp(B(x,r)) 
    \le 2 \cdot (2r)^n \om_n
    = 2^{n+1} r^n \om_n,
\end{equation}
where $\om_n=\mu(B(0,1))$.
Let $M \ge 10$, $A=(M\Z)^n=\{\ldots,-M,0,M,\ldots\}^n$
and $E_{M}=\bigcup_{z \in A} B(z,1)$.  
Also let $x \in X$.

Using~\eqref{eq-Cp-Bxr} with $r=1$  and estimating
the number of balls $B(z,1)$  in $E_{M} \cap B(x,r)$, $r \ge 10M$, gives
\[
   \biggl(\frac{r}{M \sqrt{n}}\biggr)^n \om_n
     \le \mu(B(x,r) \cap E_{M}) 
 \le \Cp(B(x,r) \cap E_{M}) 
   \le    \biggl(\frac{3r}{M }\biggr)^n  2^{n+1}\om_n
     = 2 \biggl(\frac{6r}{M }\biggr)^n  \om_n.
\]
Combining this estimate with~\eqref{eq-Cp-Bxr}
shows that
\[
     \frac{1}{2(2M\sqrt{n})^n} \le  \liminf_{r \to \infty} \densCp{r}{E_{M}}
      \le \limsup_{r \to \infty} \densCp{r}{E_{M}}
      \le 2\biggl(\frac{6}{M }\biggr)^n.
\]
It follows that, by varying $M$,
$\liminf_{r \to \infty} \densCp{r}{E_M}$ can take at least a countable
number of different values in the interval $[0,1]$, including the end points
since $\densCp{r}{X}=1$ and $\densCp{r}{\emptyset}=0$ for all $r$.
Most likely it can take any value in the interval.
\end{example}

\section{Dichotomy and capacitarily stable collections}
\label{sect-cap-stable}

In studying the proof of Theorem~\ref{thm:main}, it turns out that
dichotomy holds for many more families
of sets than the family
of inner balls.
In this section we first extract the key properties such a family
might
have and then  demonstrate dichotomy 
under these assumptions.
We then proceed to give examples of such capacitarily stable families.

\begin{deff}   \label{def-stable}
A collection $\CC$ of bounded open subsets of $X$ is \emph{capacitarily stable}
if there exist constants $\tau>1$, $\ga \ge1$
and a function 
${\phi}:(0,\infty)^2\to(0,1]$ such that:
\begin{enumerate}
\renewcommand{\theenumi}{\text{(\roman{enumi})}}%
\renewcommand{\labelenumi}{\theenumi}%
\item \label{c-1}
For every ball $B\subset X$ we can find $U\in\CC$ such that $B\subset U\subset\gamma B$.

\item \label{c-2}
For each $U\in\CC$ there exists a ball $B_U\subset X$ such that
$B_U\subset U\subset\gamma B_U$.

\item \label{c-3}
For every $\rho,R>0$ and every $U\in\CC$ with $\diam(U)\ge R$ we have
\[
\frac{\cp(U_\rho,\UU)}{\cp(U,\UU)} \ge 1-\pip(\rho,R),
\]
where 
$U_\rho$ is the $\rho$\,-interior of $U$ as in~\eqref{eq:U_eps},
and $\UU:=\tau\gamma B_U$.

\item \label{c-4}
For every $\rho>0$,
\[
\lim_{R\to\infty} \pip(\rho,R)=0.
\]
\end{enumerate}
\end{deff}

\begin{deff}  \label{def-DCC}
Given a capacitarily stable collection $\CC$ with parameters
$\tau$, $\ga$ and $\pip$, we set for $r>0$  
and $E\subset X$, 
\[
\DCC(r,E) = 
\inf_{\substack{U\in\CC\\ r\le\diam(U)\le 2\ga r}}
\frac{\capapED{E\cap U}{\UU}}{\capapED{U}{\UU}}.
\]
\end{deff}

Note that since $X$ (under our assumptions) is connected and unbounded
we have that $r \le \diam(B(x,r)) \le 2r$ for every ball $B(x,r)$.
Hence, because of
\ref{c-1}, the collection $\{U\in\CC:r\le\diam(U)\le 2\ga r\}$
is nonempty, and thus $\DCC(r,E)<\infty$ (and so $\le1$).
A capacitarily stable collection $\CC$ might be associated with more than
one choice of the parameters $\tau$ and $\ga$. Different choices of $\tau$ and $\gamma$
impact the value of $\DCC(r,E)$. However, the value of $\DCC(r,E)$ is independent of 
the choice of $\phi$.

We are now ready to obtain the main dichotomy result for capacitarily
stable collections.
Since $X$ is unbounded it follows from Definition~\ref{def-stable}\,\ref{c-1}
that $\sup_{U \in \CC} \diam(U)=\infty$ whenever $\CC$ is a capacitarily
stable collection,
and thus it makes sense to consider the limits $R \to \infty$ in 
\refthm{meta} and \refcor{meta} below.

\begin{thm}  \label{thm:meta} 
Let $\CC$ and $\CC'$ be capacitarily stable collections of
bounded open sets in $X$, $\tau > 1$, and $E\subset X$.
Then the following statements are equivalent:
\begin{enumerate}
\renewcommand{\theenumi}{\textup{(\alph{enumi})}}%
\renewcommand{\labelenumi}{\theenumi}%
\item \label{meta-a}
$\DCC(r,E)>0$ for some $r>0$,
\item \label{meta-b1}
$\lim_{R\to\infty}\mathcal{D}^{\CC}(R,E)=1$,
\item \label{meta-b}
$\lim_{R\to\infty}\mathcal{D}^{\CC'}(R,E)=1$,
\item \label{meta-c} 
$\lim_{R\to\infty}\densin{R}{\tau}{E}=1$.
\setcounter{saveenumi}{\value{enumi}}
\end{enumerate}
\end{thm}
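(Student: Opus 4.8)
The plan is to prove \refthm{meta} by establishing the cycle of implications
\ref{meta-a} $\Rightarrow$ \ref{meta-b1} $\Rightarrow$ \ref{meta-b} $\Rightarrow$ \ref{meta-a}, together with the equivalence of \ref{meta-c} with these (which follows by taking $\CC$ to be the collection of inner balls, using \refrem{Bin-corkscrew} to see that this collection is indeed capacitarily stable with $\pip$ supplied by \reflem{approx.in}). The heart of the matter is the implication \ref{meta-a} $\Rightarrow$ \ref{meta-b1}, and this should closely mirror the proof of \refthm{main} given above, with the inner balls replaced by the sets $U\in\CC$.

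\medskip

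\emph{First}, I would show \ref{meta-a} $\Rightarrow$ \ref{meta-b1}. Suppose $\DCC(r_0,E)>0$ for some $r_0>0$. Using property \ref{c-2}, each relevant $U$ contains a ball $B_U$ with $U\subset\gamma B_U$, so via \reflem{capball} (and the comparability of capacities in concentric balls of comparable radii) the positivity of $\DCC(r_0,E)$ translates into a corkscrew-type lower bound of the form \refeq{C/C>eta}: there is $0<\eta<1$ such that $\capapED{E\cap B(x,r)}{B(x,\Lambda r)}\ge\eta\,\capapED{B(x,2r)}{B(x,\Lambda r)}$ for all $x$ in a suitable interior region, at an appropriate scale $r$ comparable to $r_0$. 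The geometric input needed here is that every $U$ under consideration, being sandwiched between $B_U$ and $\gamma B_U$, satisfies an interior corkscrew condition with parameters depending only on $\gamma$; this is the analogue of \refrem{Bin-corkscrew}. Given a target $\alpha<1$, choose $k$ with $(1-(1-C_0\eta^{1/(p-1)})^k)^{p-1}\ge\alpha$ as in the proof of \refthm{main}. Then \refcor{C/C>} gives
\[
\frac{\capapED{E\cap U}{\UU}}{\capapED{U_{k\Lambda r}}{\UU}}\ge\alpha,
\]
while property \ref{c-3} together with \ref{c-4} gives that $\capapED{U_{k\Lambda r}}{\UU}/\capapED{U}{\UU}\ge 1-\pip(k\Lambda r,R)\to1$ as $R=\diam(U)\to\infty$. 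Multiplying the two ratios and taking the infimum over $U\in\CC$ with $R\le\diam(U)\le 2\gamma R$ yields $\DCC(R,E)\ge\alpha(1-\pip(k\Lambda r,R))$ for all large $R$; letting $R\to\infty$ and then $\alpha\to1$, and using $\DCC\le1$, gives \ref{meta-b1}.

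\medskip

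\emph{Second}, the implication \ref{meta-b1} $\Rightarrow$ \ref{meta-b} is where I would use both collections symmetrically, and it is essentially the only genuinely new ingredient beyond \refthm{main}. The idea is that \ref{meta-b1} forces \ref{meta-a} for $\CC$ — indeed $\lim_R\DCC(R,E)=1$ trivially implies $\DCC(R,E)>0$ for large $R$ — and then, by the argument just given, \ref{meta-a} for $\CC$ propagates to a corkscrew bound \refeq{C/C>eta} at some fixed scale. But such a bound is a statement purely about $E$ and ordinary balls, not referring to $\CC$ at all, so it equally feeds \refcor{C/C>} with $U=U'\in\CC'$ and then property \ref{c-3}--\ref{c-4} of $\CC'$, yielding $\lim_R\mathcal D^{\CC'}(R,E)=1$. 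The trivial implication \ref{meta-b} $\Rightarrow$ \ref{meta-a} (applied with $\CC'$ in place of $\CC$, then relabeling) closes the cycle. Finally, \ref{meta-c} slots in because inner balls form a capacitarily stable collection: \ref{c-1} and \ref{c-2} follow from \refeq{BinB} with $\gamma=L$, \ref{c-3} from \reflem{approx.in} applied to $\Bin$ (with $\tau$ the same parameter), and \ref{c-4} is the statement that $\eta_{\kappa/2}^j\to0$ as the scale grows; moreover $\densin{R}{\tau}{E}$ and $\mathcal D^{\CC_{\mathrm{in}}}(R,E)$ are comparable by \reflem{capball}, so their limits coincide, and $\lim_R\densin{R}{\tau}{E}=1$ is equivalent to the other three conditions.

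\medskip

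The main obstacle I anticipate is \emph{bookkeeping of scales}: matching the diameter $R$ used in $\DCC(R,E)$ with the radii of the comparable balls $B_U$, the dilated balls $\UU=\tau\gamma B_U$, the corkscrew radius $\kappa r$, and the $\pip(k\Lambda r,\cdot)$ argument, all while keeping every constant dependent only on $p$, $X$, $\tau$ and $\gamma$. One must be careful that the interior corkscrew condition one extracts for $U$ holds at the scale $r$ needed to invoke \refcor{C/C>} and that $U_{k\Lambda r}$ is nonempty for all large $U$ — this is where $\diam(U)\to\infty$ together with the sandwiching $B_U\subset U\subset\gamma B_U$ is used. None of these steps is deep; the substance is entirely in recognizing that the proof of \refthm{main} never used anything about inner balls except exactly the four properties isolated in \refdef{stable}.
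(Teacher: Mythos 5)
Your proposal takes essentially the same route as the paper. The paper introduces an explicit auxiliary statement~(e): $\dens{r}{\Lambda}{E}>0$ for some $r>0$, then proves (a) $\Rightarrow$ (e) by the inequality $C\,\DCC(r,E)\le\dens{\gamma r}{\Lambda}{E}$ (obtained by sandwiching $B(x,r)\subset U\subset B(x,\gamma r)$ via \ref{c-1}, using $B_U$ from \ref{c-2}, and a chain of concentric-ball comparisons), proves (e) $\Rightarrow$ (b) via \refcor{C/C>} together with \ref{c-3}--\ref{c-4} exactly as you describe, obtains (c) by noting that (e) does not refer to the collection so the roles of $\CC$ and $\CC'$ may be swapped, and gets (d) directly from \refthm{main}. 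Your observation that ``such a bound is a statement purely about $E$ and ordinary balls, not referring to $\CC$'' is precisely this factoring through (e), made implicit rather than explicit.

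Two details deserve tightening. First, the cycle you announce, \ref{meta-a} $\Rightarrow$ \ref{meta-b1} $\Rightarrow$ \ref{meta-b} $\Rightarrow$ \ref{meta-a}, is not quite a genuine cycle as written: \ref{meta-b} is a statement about $\CC'$ and \ref{meta-a} about $\CC$, so \ref{meta-b} $\Rightarrow$ \ref{meta-a} is not the trivial ``limit $1$ implies eventually positive'' implication. The clean formulation is that each of \ref{meta-a}, \ref{meta-b1} is equivalent to the intrinsic condition (e) for any one stable collection, and since (e) is the same for all collections, \ref{meta-a} $\Leftrightarrow$ \ref{meta-b1} $\Leftrightarrow$ \ref{meta-b}. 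Second, the final claim that $\densin{R}{\tau}{E}$ and the inner-ball $\DCC(R,E)$ ``are comparable by \reflem{capball}, so their limits coincide'' does not hold as stated: two quantities in $[0,1]$ that are comparable up to a multiplicative constant need not have the same limit. To repair this you must invoke the dichotomy on both sides --- each limit is $0$ or $1$, and then comparability forces them to agree --- or, more simply, cite \refthm{main} directly for the equivalence of (e) with \ref{meta-c}, which is what the paper does and keeps \ref{meta-c} independent of the capacitarily-stable machinery.
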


As an immediate corollary we obtain the following dichotomy.

\begin{cor} \label{cor:meta}
Let $\CC$ be a capacitarily stable collection of
bounded open sets in $X$. 
Then for every $E\subset X$ one of the following statements holds:
\begin{enumerate}
\item $\lim_{R\to\infty}\mathcal{D}^{\CC}(R,E)=0$,
\item $\lim_{R\to\infty}\mathcal{D}^{\CC}(R,E)=1$.
\end{enumerate}
Furthermore, these two possibilities  are independent of\/ 
$\CC$ and its associated parameters.
\end{cor}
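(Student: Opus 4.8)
The plan is to obtain the corollary as an essentially immediate consequence of \refthm{meta}, with no further analytic input. Fix a capacitarily stable collection $\CC$ together with a choice of its parameters $\tau$, $\gamma$, $\phi$, and fix $E\subset X$. Since $0\le\DCC(r,E)\le1$ for every $r>0$ (by nonnegativity of capacity and the remark following Definition~\ref{def-DCC}), exactly one of two mutually exclusive alternatives holds: either $\DCC(r,E)>0$ for some $r>0$, or $\DCC(r,E)=0$ for all $r>0$. In the first case condition~\ref{meta-a} of \refthm{meta} is satisfied, so the implication \ref{meta-a}$\Rightarrow$\ref{meta-b1} gives $\lim_{R\to\infty}\DCC(R,E)=1$; in the second case $\DCC(R,E)=0$ for every $R>0$, so trivially $\lim_{R\to\infty}\DCC(R,E)=0$. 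This is precisely the claimed dichotomy.

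For the independence statement, let $\CC$ and $\CC'$ be two capacitarily stable collections, which may even coincide as collections but be equipped with different admissible parameters. Applying \refthm{meta} to the pair $(\CC,\CC')$ yields the equivalence \ref{meta-b1}$\Leftrightarrow$\ref{meta-b}, i.e.\ $\lim_{R\to\infty}\DCC(R,E)=1$ holds if and only if $\lim_{R\to\infty}\mathcal{D}^{\CC'}(R,E)=1$. Together with the dichotomy already established for $\DCC$ and for $\mathcal{D}^{\CC'}$, this forces the two to realize the same one of the two alternatives. Since $\phi$ plays no role in the definition of $\DCC$, the outcome therefore depends on none of $\CC$, $\tau$, $\gamma$ or $\phi$; and if one wishes to display the dependence on the index $\tau$ separately, the equivalence \ref{meta-b1}$\Leftrightarrow$\ref{meta-c} together with the $\tau$-independence asserted in \refthm{main} identifies the alternative $\lim_{R\to\infty}\DCC(R,E)=1$ with the collection-free condition $\lim_{R\to\infty}\densin{R}{\tau}{E}=1$.

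I expect no genuine obstacle here. The only subtlety is purely logical: one must note that the failure of ``$\DCC(r,E)>0$ for some $r>0$'' is exactly ``$\DCC(r,E)=0$ for all $r>0$'' (which uses only $\DCC\ge0$), so that the second alternative is indeed forced in that case. Everything else is bookkeeping with the chain of equivalences in \refthm{meta}, and no new capacity estimates are needed.
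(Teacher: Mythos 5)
Your argument is exactly the intended one: the paper states the corollary as an immediate consequence of Theorem~\ref{thm:meta} without further comment, and your case split (either $\DCC(r,E)>0$ for some $r$, so \ref{meta-a}$\Rightarrow$\ref{meta-b1} applies, or $\DCC(R,E)\equiv 0$) together with the equivalences \ref{meta-b1}$\Leftrightarrow$\ref{meta-b}$\Leftrightarrow$\ref{meta-c} for the independence claim is precisely what is meant. Correct, and the same approach as the paper.
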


Note also that by appealing to \refthm{main} we can directly obtain
several further statements equivalent to those in \refthm{meta}.

For the dichotomy to hold what happens at small scales is irrelevant. 
We could therefore have associated
yet another parameter $R_0 \ge 0$ with capacitarily stable collections,
requiring \ref{c-1} and \ref{c-3} in Definition~\ref{def-stable}
to hold only for $\diam(B)>R_0$ resp.\ $R>R_0$.
The implications 
\ref{meta-a} $\imp$ \ref{meta-b1}, \ref{meta-b},  \ref{meta-c}
in Theorem~\ref{thm:meta}
would then hold
provided that $r$ is sufficiently large (depending on $R_0$). 
A drawback would however have been that
here, as well as in results similar to Theorems~\ref{thm-cork-imp-stable}
and~\ref{thm:stable-2}, one also would have to consider
possible enlargements of this parameter.
We have refrained from this generalization.

\begin{proof}[Proof of \refthm{meta}]
To facilitate the proof we introduce one more statement that will
be shown to be equivalent to the statements in the theorem:
\begin{enumerate}
\renewcommand{\theenumi}{\textup{(\alph{enumi})}}%
\renewcommand{\labelenumi}{\theenumi}%
\setcounter{enumi}{\value{saveenumi}}
\item \label{meta-d}
$\dens{r}{\Lambda}{E}>0$ for some $r>0$.
\end{enumerate}
Recall that $\Lambda=100\lambda$, where $\lambda\ge1$ is 
the dilation constant in the \p-Poincar\'e inequality.

\ref{meta-b1} $\imp$ \ref{meta-a}
This is trivial.

\ref{meta-a} $\imp$ \ref{meta-d}
It is sufficient to prove that for all $r>0$,
\begin{equation}\label{eq:DCC<dens}
C\DCC(r,E)
\le \dens{\gamma r}{\Lambda}{E},
\end{equation}
where $C>0$ depends only on the parameters of $\CC$.
Let $r>0$ 
and $x\in X$.
By Definition~\ref{def-stable}\,\ref{c-1} we find $U\in\CC$ 
such that $B(x,r)\subset U\subset B(x,\gamma r)$,
and then by Definition~\ref{def-stable}\,\ref{c-2} 
we find a ball $B_U=B(x_U,r_U)$ such that
$B_U\subset U\subset \ga B_U$.
As $x_U \in U \subset B(x,\ga r)$, we see that $d(x,x_U)< \ga r$,
so that $B(x,\Lambda\gamma r)\subset B(x_U,(\Lambda+1)\gamma r)$.
Similarly, $d(x,x_U) < \ga r_U$ and thus
$U^*= B(x_U,\tau \ga r_U) \subset B(x, (\tau+1)\ga r_U)$.
Hence \reflem{capball} shows that 
\[
\capapED{E\cap B(x,\gamma r)}{B(x,\Lambda\gamma r)}
\ge\capapED{E\cap U}{B(x_U,(\Lambda+1)\gamma r)}
\simeq \capapED{E\cap U}{\UU}
\]
and
\[
\capapED{U}{\UU}
\ge \capapED{B(x,r)}{B(x,(\tau+1)\gamma r_U)} 
  \simeq \capapED{B(x,\gamma r)}{B(x,\Lambda \gamma r)}.
\]
Since $r\le\diam(U)\le 2\gamma r$, we get that
\[
\frac{\capapED{E\cap B(x,\gamma r)}{B(x,\Lambda\gamma r)}}{\capapED{B(x,\gamma r)}{B(x,\Lambda \gamma r)}}
\ge C\frac{\capapED{E\cap U}{\UU}}{\capapED{U}{\UU}}
\ge C\DCC(r,E).
\]
Taking the infimum with respect to $x\in X$, we obtain \eqref{eq:DCC<dens}.

\ref{meta-d} $\imp$ \ref{meta-b1}
The proof of this implication 
is almost the same as the proof of \refthm{main}.
Let $r>0$ be such that $\dens{r}{\La}{E}>0$.
Note that this 
implies the hypothesis \refeq{C/C>eta} of Lemma~\ref{lem:hmp<onUk}
for some $0<\eta<1$.
Let $U\in\CC$ and let $\UU$ be as in Definition~\ref{def-stable}\,\ref{c-3}.
Corollary~\ref{cor:C/C>} then gives
\[
\frac{\capapED{E\cap U}{\UU}}{\capapED{U_{k\Lambda r}}{\UU}}
\ge (1-(1-C_0\eta^{1/(p-1)})^k)^{p-1},
\]
whenever
$U_{k\Lambda r}\ne\emptyset$.

Take an arbitrary positive number $\alpha<1$ and
find a positive integer $k$ such that
the right-hand side of the above inequality is greater than $\alpha$.
If $\diam (U) \ge R$ and $B_U=B(x_U,r_U)$ is as in 
Definition~\ref{def-stable}\,\ref{c-2}, then $R\le2\ga r_U$ and $x_U\in U_{k\La r}$
provided that $k\La r < r_U$.
Thus, $U_{k\La r}\ne\emptyset$ whenever $\diam (U)\ge R> 2 \ga k \Lambda r$.
Definition~\ref{def-stable}\,\ref{c-3} with $\rho=\ga k\Lambda r$
then yields that for $R>2\rho$,
\[
\DCC(R,E) \ge \inf_{\substack{U\in\CC\\\diam(U)\ge R}}
\frac{\capapED{E\cap U}{\UU}}{\capapED{U_{\rho}}{\UU}}
\frac{\capapED{U_{\rho}}{\UU}}{\capapED{U}{\UU}}
\ge \al (1-\pip(\rho,R)).
\]
Letting $R \to \infty$ and then $\alpha \to 1$ shows that
$\lim_{R\to\infty} \DCC(R,E)=1$,
by Definition~\ref{def-stable}\,\ref{c-4}.

\ref{meta-b} $\eqv$ \ref{meta-d}
As we have now shown that \ref{meta-b1} $\eqv$ \ref{meta-d}, 
swapping the roles of $\CC$ and $\CC'$ 
immediately yields~\ref{meta-b} $\eqv$ \ref{meta-d}.

\ref{meta-c} $\eqv$ \ref{meta-d}
This follows directly from \refthm{main}.
\end{proof}

Next, we will present several useful examples of capacitarily stable collections.

\begin{thm}   \label{thm-cork-imp-stable}
Assume that $\CC$ is a family of open subsets of $X$ which satisfies 
Definition~\ref{def-stable}\,\ref{c-1}
with $\ga \ge 1$, and that there exists $\be>0$ such that
every $U\in\CC$ satisfies the interior
corkscrew condition with parameters $\ka$, $0$ and $\be\diam(U)$.
Let $\tau>1$ and $\gat:=\max\{\ga,1/\kappa \be\}$.
Then there is a function $\phi$ such that 
$\CC$ is capacitarily stable with parameters $\tau$, $\gat$ and $\phi$.
\end{thm}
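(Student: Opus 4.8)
Condition~\ref{c-1} is essentially free: by hypothesis $\CC$ already satisfies \ref{c-1} with constant $\ga\le\gat$, so the same sets $U$ work. Condition~\ref{c-2} is the first thing requiring a small argument: given $U\in\CC$, the interior corkscrew condition (applied at any point $x\in U$ with a radius close to $\be\diam(U)$, say $r=\tfrac12\be\diam(U)$ if $R_1=0$ allows it, or more carefully a radius slightly below $\be\diam(U)$) produces a ball $B_U=B(x_U,r_U)\subset U$ of radius $r_U\ge\ka\be\diam(U)/2$ (up to adjusting constants). Then $U\subset B(x_U,\diam(U))\subset B(x_U,(2/\ka\be)\,r_U)\subset \gat B_U$ once $\gat\ge 2/\ka\be$; since $\gat=\max\{\ga,1/\ka\be\}$ one should be slightly careful with the factor of $2$, which is absorbed by choosing the corkscrew radius appropriately or by a harmless redefinition of $\gat$ with an extra universal factor — I would simply fix the constants so that the inclusion $B_U\subset U\subset\gat B_U$ holds cleanly.

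**The heart of the proof is producing the function $\phi$ satisfying \ref{c-3} and \ref{c-4}, and here \reflem{approx.in} does essentially all the work.** Fix $U\in\CC$ with $\diam(U)\ge R$ and let $\UU=\tau\gat B_U$ (with $B_U=B(x_U,r_U)$ from \ref{c-2}). By hypothesis $U$ satisfies the interior corkscrew condition with parameters $\ka$, $0$ and $\be\diam(U)$. To apply \reflem{approx.in} I need $\be\diam(U)\le\dist(U,X\setm\UU)$; since $U\subset\gat B_U$ and $\UU=\tau\gat B_U$ with $\tau>1$, we have $\dist(U,X\setm\UU)\ge(\tau-1)\gat r_U\ge c(\tau-1)\diam(U)$ for a universal $c>0$ (using $r_U\simeq\diam(U)$ from \ref{c-2}), so the condition holds after possibly shrinking $\be$ to $\be':=\min\{\be,c(\tau-1)\}$ — note $\be'$ depends only on $\tau$, $\ka$, $\be$. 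Now \reflem{approx.in} (applied with $R_2=\be'\diam(U)$) gives, for every integer $j\ge1$ and every $\rho\le\ka\be'\diam(U)/2\Lambda^j$,
\[
\frac{\cp(U_\rho,\UU)}{\cp(U,\UU)}\ge(1-\eta_{\ka/2}^{\,j})^{p-1}.
\]
Given $\rho>0$ and $R>0$, define $j=j(\rho,R)$ to be the largest integer with $\rho\le\ka\be'R/2\Lambda^j$; such a $j$ exists and is positive once $R$ is large enough compared to $\rho$ (for smaller $R$ set $\phi(\rho,R)=1$, which is allowed since $\phi$ maps into $(0,1]$ and \ref{c-3} is then vacuous as $1-\phi\le0$). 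Since $\diam(U)\ge R$, the displayed inequality applies with this $j$, so setting
\[
\phi(\rho,R):=1-(1-\eta_{\ka/2}^{\,j(\rho,R)})^{p-1}
\]
(for $R$ large, and $\phi(\rho,R):=1$ otherwise) gives exactly \ref{c-3}.

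**Condition~\ref{c-4} is then immediate and is the cleanest part.** For fixed $\rho>0$, as $R\to\infty$ the integer $j(\rho,R)\to\infty$ (because $\ka\be'R/2\Lambda^j\to\infty$ for each fixed $j$, forcing the maximal such $j$ to grow), hence $\eta_{\ka/2}^{\,j(\rho,R)}\to0$ since $0<\eta_{\ka/2}<1$, and therefore $\phi(\rho,R)\to0$. **The main obstacle — really the only place demanding care — is bookkeeping the constants:** making sure the interior corkscrew parameter gets shrunk from $\be$ to a $\be'$ depending only on $\tau,\ka,\be$ so that the separation hypothesis $R_2\le\dist(U,X\setm\UU)$ of \reflem{approx.in} is genuinely met, and confirming that $r_U\simeq\diam(U)$ (from \ref{c-2}) with comparison constants depending only on $\gat$, so that all thresholds can be expressed purely in terms of $R=\diam(U)$ rather than the ambient diameter. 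Once this is tracked, everything else is a direct quotation of \reflem{approx.in} and Definitions~\ref{def-stable} and~\ref{def-DCC}.
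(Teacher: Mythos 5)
Your proposal is correct and follows essentially the same route as the paper's proof: verify \ref{c-1} trivially, obtain $B_U$ from the corkscrew condition for \ref{c-2}, and then invoke Lemma~\ref{lem:approx.in} (after checking the separation hypothesis $R_2\le\dist(U,X\setm\UU)$) to produce $\pip$ for \ref{c-3}--\ref{c-4}. The only place you make things slightly harder than necessary is in \ref{c-2}: the paper applies the corkscrew condition with radius (essentially) $\be\diam(U)$ itself, yielding a ball $B_U=B(z_U,\ka\be\diam(U))\subset U$, so that $U\subset B(z_U,\diam(U))\subset\gat B_U$ holds with $\gat=\max\{\ga,1/\ka\be\}$ exactly, with no spare factor of $2$ to absorb; likewise for \ref{c-3} the paper simply sets $R_2=\min\{\be,\tau-1\}R$ with no auxiliary constant $c$, since $\gat\ka\be\ge1$ gives $\dist(U,X\setm\UU)\ge(\tau-1)\diam(U)$ directly.
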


In view of Remark~\ref{rem-Bin-corkscrew}, it follows
in particular that the family $\CC$  of all inner balls
is capacitarily stable, however,
note that
the density $\DCC$  
is obtained by looking at inner balls of 
diameters between $r$ and $2\ga r$, while $\densinname$ is obtained by looking at inner balls of radius $r$; thus these
two numbers could be different for each $r>0$.

\begin{proof}
For $U\in\CC$, pick $x\in U$ and use the corkscrew condition to find a ball
\[
{B_U:=}B(z_U,\ka\be\diam(U))\subset U\cap B(x,\be\diam(U)).
\]
Then \ref{c-2} holds with $\gat$.

To prove \ref{c-3} and \ref{c-4}, 
let $\rho,R>0$,
set $R_2=\min\{\be,\tau-1\}R$ and let $j$
be the largest integer such that
\[
\La^j \le \frac{\ka R_2}{2\rho}.
\]
Given 
$U\in\CC$ with $\diam(U)\ge R$, Lemma~\ref{lem:approx.in}
with $\Om=\UU:=\tau \gat B_U$ 
implies that
\[
\frac{\cp(U_\rho,\UU)}{\cp(U,\UU)} \ge (1-\eta_{\ka/2}^{{j_0}})^{p-1}
\ge 1-\max\{1,p-1\}\eta_{\ka/2}^{{j_0}} =: 1-\pip(\rho,R),
\]
where $j_0=\max\{j,0\}$.
Since ${j_0}\ge {\log R/{\log\La}+a}$ for some constant $a$ depending on $\ka$,
$\be$, $\tau$, $\La$ and $\rho$,
this implies that for every fixed $\rho$, we have
$\pip(\rho,R)\to0$ as $R\to\infty$,
i.e.\ \ref{c-4} holds.
\end{proof}

\begin{thm} \label{thm:stable-2}
Let $\CC$ be a family of open sets satisfying \ref{c-1} and \ref{c-2} of
Definition~\ref{def-stable}.
Let $\beta>0$.
For each $U\in\CC$, set
\[
U^\be=\{x\in X:\distin(x,U)<\be\diam(U)\}.
\]
Then $\CC^\be:=\{U^\be:U\in\CC\}$ is capacitarily stable.
\end{thm}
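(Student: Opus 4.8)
The plan is to exhibit $\CC^\be$ as a family to which Theorem~\ref{thm-cork-imp-stable} applies. That is, I will verify that $\CC^\be$ satisfies Definition~\ref{def-stable}\,\ref{c-1} with a new constant and that every $U^\be\in\CC^\be$ satisfies an interior corkscrew condition on all scales below a fixed multiple of $\diam(U^\be)$; Theorem~\ref{thm-cork-imp-stable}, with its $\ga$, $\ka$, $\be$ taken to be these new data, then immediately gives that $\CC^\be$ is capacitarily stable (with $\tau$ any number $>1$).

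First I would dispose of the elementary geometry. Note the identity $U^\be=\bigcup_{y\in U}\Bin(y,r_0)$ with $r_0:=\be\diam(U)$; in particular $U\subset U^\be$, and every point of $U^\be$ is within $d$-distance $r_0$ of $U$ because $d\le\distin$. Given a ball $B=B(x_0,r)$, pick $U\in\CC$ with $B\subset U\subset\ga B$ by Definition~\ref{def-stable}\,\ref{c-1}; then $\diam(U)\le 2\ga r$, so $B\subset U\subset U^\be\subset B(x_0,\ga(1+2\be)r)$, which is \ref{c-1} for $\CC^\be$ with $\ga':=\ga(1+2\be)\ge1$. Using \ref{c-2} for $\CC$, write $B_U=B(x_U,r_U)\subset U\subset\ga B_U$; then $r_U\le\diam(U)$ (recall $\diam(B(x,s))\ge s$), and by $\distin\le Ld$ any two points of $U$ are within $\distin$-distance $2L\ga r_U\le 2L\ga\diam(U)$, whence
\[
\diam(U)\le\diam(U^\be)\le(2L\ga+2\be)\diam(U).
\]
In particular every $U^\be$ is bounded, and scales comparable to $\diam(U^\be)$ and to $\diam(U)$ are interchangeable.

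The heart of the argument is the interior corkscrew condition for $U^\be$. Fix $\be''$ small enough that $\be''\diam(U^\be)\le 2r_0$ for all $U\in\CC$; by the displayed inequality $\be'':=\be/(L\ga+\be)$ works. Let $U\in\CC$, $x\in U^\be$ and $0<\rho<\be''\diam(U^\be)$, so $\rho<2r_0$, and choose $y\in U$ with $\distin(x,y)<r_0$. Since $X$ is geodesic with respect to $\distin$, there is a point $q$ on a $\distin$-geodesic from $x$ to $y$ with $\distin(x,q)=\min\{\distin(x,y),\tfrac\rho2\}$, so that $\distin(q,U)\le\distin(q,y)=\distin(x,y)-\distin(x,q)$. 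Set $\sigma=\tfrac\rho2$. Splitting into the cases $q=y$ (when $\distin(x,y)\le\tfrac\rho2$) and $\distin(x,q)=\tfrac\rho2$, one checks that for every $w\in\Bin(q,\sigma)$ one has $\distin(w,U)\le\distin(w,q)+\distin(q,U)<r_0$, hence $\Bin(q,\sigma)\subset U^\be$, and $d(x,w)\le\distin(x,q)+\distin(q,w)<\rho$, hence $\Bin(q,\sigma)\subset B(x,\rho)$. Finally $B(q,\rho/(2L))\subset\Bin(q,\tfrac\rho2)$ by \refeq{BinB}, so $U^\be\cap B(x,\rho)$ contains a ball of radius $\rho/(2L)$. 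Thus every $U^\be\in\CC^\be$ satisfies the interior corkscrew condition with parameters $\ka':=1/(2L)$, $0$ and $\be''\diam(U^\be)$.

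With \ref{c-1} (for $\ga'$) and the uniform corkscrew condition in place, Theorem~\ref{thm-cork-imp-stable} produces a function $\phi$ making $\CC^\be$ capacitarily stable with parameters $\tau$, $\tilde\ga=\max\{\ga',1/(\ka'\be'')\}$ and $\phi$. The one step needing genuine care is the corkscrew verification: the hypotheses give no regularity of $U$ itself beyond \ref{c-1}--\ref{c-2}, so all the room must come from the inner-neighborhood operation. The short case split — whether $x$ lies within $\distin$-distance $\tfrac\rho2$ of $U$ or not — together with the availability of $\distin$-geodesics, is exactly what lets the single choice $\sigma=\tfrac\rho2$ succeed in both regimes, with $1/(2L)$ being the cost of converting the inner ball $\Bin(q,\tfrac\rho2)$ back into an ordinary ball.
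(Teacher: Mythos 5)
Your proposal is correct and follows the same strategy as the paper: verify Definition~\ref{def-stable}\,\ref{c-1} for $\CC^\be$ with an enlarged constant, verify a uniform interior corkscrew condition for each $U^\be$ at scales up to a fixed fraction of $\diam(U^\be)$, and then invoke Theorem~\ref{thm-cork-imp-stable}. The only difference is that where the paper delegates the corkscrew step to Proposition~3.4 of~\cite{MR2338101}, you supply a self-contained geometric argument (choosing a point $q$ on a $\din$-geodesic from $x$ towards a nearby $y\in U$ and placing an inner ball $\Bin(q,\rho/2)$ inside $U^\be\cap B(x,\rho)$), obtaining the explicit corkscrew constant $1/2L$ in terms of $\diam(U^\be)$ directly. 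This is a modest gain in self-containment; the resulting constants differ slightly from the cited reference but this is immaterial for the conclusion.
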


\begin{proof} 
It can be shown as in the proof of Proposition~3.4 in \cite{MR2338101}
that each $U^\be$ satisfies the interior corkscrew condition with
parameters $\ka=1/3L$, $0$ and 
$\be\diam(U)/3$, where $L$ is the quasiconvexity constant.
Also, by \ref{c-1} for $\CC$, if 
$B=B(x,r)$, then there is $U \in \CC$ such
that $B \subset U \subset \ga B$.
Since 
\[
\diam(U^\be) \le (1+2\be)\diam(U) \le 
{(1+2\be)2\ga r =: \gat} r,
\]
we see that $B \subset U^\be \subset \gat B$.
Thus, \ref{c-1} holds for $\CC^\be$ as well and
Theorem~\ref{thm-cork-imp-stable} concludes the proof.
\end{proof}

\begin{rem}
A particularly well shaped collection $\CC^\be$ is obtained if
\[
  \CC=\{B(x,r): x\in X \text{ and }r>0\}.
\]
Then the ``almost balls'' $B^\be(x,r)$ satisfy
$B(x,r)\subset B^\be(x,r) \subset B(x,(1+2\be)r)$
and are thus closer in shape to ordinary balls than what inner balls are,
cf.\ \eqref{eq:BinB}.
By Theorems~\ref{thm:meta} and~\ref{thm:stable-2}, 
dichotomy holds for these ``almost  balls''.
If $X$ is geodesic, i.e.\ for inner balls, we have
$B^\be(x,r)=B(x,(1+2\be)r)$. 
\end{rem}

The inner balls are John domains, see Remark~\ref{rem-john} below.
It is therefore natural to study dichotomy for John domains.

\begin{deff}
For an open set $U$ we let $\disU(x)=\dist(x,X \setm  U)$.
Let $0<c_J\le1$.
We say that $U$ is a \emph{$c_J$-John domain} 
if there exists (a John center) $x_U\in U$
such that each $x\in U$ can be connected to $x_U$
by a rectifiable curve $\gamma\subset U$ with
\begin{equation}\label{eq:John}
c_J \ell(\gamma(x,y))\le \disU(y)
\qtext{for all } y\in\gamma,
\end{equation}
where $\gamma(x,y)$ is the subcurve
of $\gamma$ from $x$ to $y$.
Such a curve $\gamma$ will be referred to as a $c_J$-John curve connecting
$x$ and $x_U$.
\end{deff}

We next show that John domains satisfy the interior corkscrew condition.
(Note that the only unbounded John domain is $X$ itself which is excluded
from our considerations.)

\begin{lem}\label{lem:John>ics}
Let $U$ be a bounded  $c_J$-John domain with
$0<c_J\le 1$.
Then $U$ satisfies the interior corkscrew condition with
parameters $\kappa=c_J^2/4$, $0$ and $\diam(U)$.
\end{lem}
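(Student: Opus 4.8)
The plan is to verify the interior corkscrew condition directly from the definition of a John domain: starting at the given point, follow a John curve for a controlled arc length and place the corkscrew ball at the point where the curve has gone deep enough into $U$.

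First I would record that the John center $x_U$ is deep inside $U$. For every $x'\in U$, the $c_J$-John curve $\gamma'$ from $x'$ to $x_U$ satisfies, by \refeq{John} applied with $y=x_U$, that $\disU(x_U)\ge c_J\ell(\gamma')\ge c_J d(x',x_U)$, since the length of a curve dominates the distance between its endpoints. Taking the supremum over $x'\in U$ and using $\diam(U)\le 2\sup_{x'\in U}d(x',x_U)$ (triangle inequality through $x_U$) gives $\disU(x_U)\ge\tfrac12 c_J\diam(U)$.

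Now fix $x\in U$ and $0<r<\diam(U)$, and let $\gamma:[0,\ell(\gamma)]\to U$ be a $c_J$-John curve from $x=\gamma(0)$ to $x_U$, parametrized by arc length. I would split into two cases. If $\ell(\gamma)\ge r/2$, put $y=\gamma(r/2)$, so that $d(x,y)\le r/2$ and, by \refeq{John}, $\disU(y)\ge\tfrac12 c_J r$. If $\ell(\gamma)<r/2$, put $y=x_U$; then $d(x,y)\le\ell(\gamma)<r/2$, and the previous step together with $r<\diam(U)$ gives $\disU(y)\ge\tfrac12 c_J\diam(U)>\tfrac12 c_J r$. In both cases $d(x,y)\le r/2$ and $\disU(y)\ge\tfrac12 c_J r$, hence $B(y,\tfrac12 c_J r)\subset B(y,\disU(y))\subset U$, and, since $c_J\le1$, also $B(y,\tfrac12 c_J r)\subset B(y,\tfrac12 r)\subset B(x,r)$. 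Thus $U\cap B(x,r)$ contains a ball of radius $\tfrac12 c_J r\ge\tfrac14 c_J^2 r$, which establishes the interior corkscrew condition with parameters $\kappa=c_J^2/4$, $0$ and $\diam(U)$.

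There is no real obstacle here; the only care needed is routine bookkeeping with the triangle inequality — using that curve length bounds endpoint distance both to get $d(x,y)\le r/2$ and to relate $\disU(x_U)$ to $\diam(U)$, and checking the nested inclusions $B(y,\tfrac12 c_J r)\subset B(y,\disU(y))\subset U$ and $B(y,\tfrac12 c_J r)\subset B(x,r)$. One in fact obtains the larger corkscrew constant $c_J/2$; the weaker value $c_J^2/4$ stated here is all that is needed in the sequel.
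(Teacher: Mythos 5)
Your proof is correct and takes essentially the same approach as the paper: establish $\disU(x_U)\ge c_J\diam(U)/2$ from the John condition, then walk along the $c_J$-John curve from $x$ to find a point deep in $U$ yet close to $x$. The only real difference is the case split. The paper splits on whether $\disU(x)\ge c_J^2 r/4$, and in the hard case uses the triangle inequality to derive $d(x,x_U)>c_J r/4$ so as to pick a point $y$ on the curve at metric distance $c_J r/4$ from $x$; bounding $\ell(\gamma(x,y))$ below by $d(x,y)$ then costs a factor $c_J$ and leads to $\kappa=c_J^2/4$. You instead split on whether $\ell(\gamma)\ge r/2$ and stop at arc length exactly $r/2$ (or at $x_U$ if the curve is shorter), which tracks the arc length directly — the quantity appearing in \refeq{John} — and so avoids that loss. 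The result is the sharper corkscrew constant $\kappa=c_J/2$; the weaker $c_J^2/4$ of the statement suffices for the paper's purposes, but your variant is cleaner and yields more.
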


\begin{proof}
Let $x_U$ be the John center of $U$.
For any $x\in U$ we find a $c_J$-John curve $\gamma$ connecting $x$ and $x_U$,
i.e., \refeq{John} holds.
In particular, $\disU(x_U)\ge c_J\ell(\gamma)\ge c_J d(x,x_U)$.
Taking the supremum with respect to $x\in U$, we obtain
$\disU(x_U)\ge c_J\diam(U)/2$.

Now let $x\in U$ and $0<r<\diam(U)$.
We claim that $U\cap B(x,r)$ contains a ball of radius $c_J^2r/4$.
If $\disU(x)\ge c_J^2r/4$, then $U\cap B(x,r)\supset B(x,c_J^2r/4)$.
So, suppose that $\disU(x)< c_J^2r/4$.
Then
\[
\frac{c_Jr}2<\frac{c_J\diam(U)}2\le \disU(x_U)\le\disU(x)+d(x,x_U)<\frac{c_J^2r}4+d(x,x_U),
\]
so that $d(x,x_U)>c_Jr/4$.
Let $\gamma$ be a John curve connecting $x$ and $x_U$.
We find a point $y\in\gamma$ with $d(x,y)=c_Jr/4$.
Then $\disU(y)\ge c_J\ell(\gamma(x,y))\ge c_J^2r/4$.
Hence, 
\[
 B(y,c_J^2r/4) \subset U \cap B(x,c_J^2r/4+c_Jr/4) \subset  U\cap B(x,r),
\]
as required.
\end{proof}

\begin{thm} \label{thm:John}
Let $\Jclass(c_J)$ be the family of all bounded $c_J$-John domains.
If $\Jclass(c_J)$ satisfies Definition~\ref{def-stable}\,\ref{c-1},
in particular if $0<c_J\le1/L$
(where $L$ is the quasiconvexity constant), then
it is capacitarily stable.
\end{thm}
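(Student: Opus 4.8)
The plan is to obtain \refthm{John} as a short consequence of \reflem{John>ics} and Theorem~\ref{thm-cork-imp-stable}. Recall that the latter says: if a family $\CC$ of open subsets of $X$ satisfies the covering property~(i) of Definition~\ref{def-stable} with some $\ga\ge1$, and if there is a single $\be>0$ such that every $U\in\CC$ satisfies the interior corkscrew condition with parameters $\ka$, $0$ and $\be\diam(U)$, then $\CC$ is automatically capacitarily stable. So everything reduces to verifying these two hypotheses for $\CC=\Jclass(c_J)$.

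The corkscrew hypothesis is exactly \reflem{John>ics}: every bounded $c_J$-John domain $U$ satisfies the interior corkscrew condition with parameters $\ka=c_J^2/4$, $0$ and $\diam(U)$. This matches the requirement of Theorem~\ref{thm-cork-imp-stable} with $\be=1$, since then $\be\diam(U)=\diam(U)$. Hence, once condition~(i) is in hand, Theorem~\ref{thm-cork-imp-stable} directly yields that $\Jclass(c_J)$ is capacitarily stable, for instance with parameters $\tau$ (any fixed $\tau>1$), $\gat=\max\{\ga,4/c_J^2\}$ and a suitable function $\phi$.

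It remains to address condition~(i). In the general statement it is assumed outright, so nothing is needed. For the ``in particular'' clause, suppose $0<c_J\le1/L$. Each inner ball $\Bin(x,R)$ is a $(1/L)$-John domain with John center $x$ (see also Remark~\ref{rem-john}); indeed, given $y\in\Bin(x,R)$, pick a curve from $y$ to $x$ lying in $\Bin(x,R)$ whose $\din$-length is within $\eps<R-\din(x,y)$ of $\din(x,y)$, and combine this with the elementary bound $\dist(z,X\sm\Bin(x,R))\ge(R-\din(x,z))/L$, valid for every $z\in X$. Since decreasing the John constant only weakens~\refeq{John}, $\Bin(x,R)$ is then also a $c_J$-John domain, and it is bounded because $X$ is unbounded, so $\Bin(x,R)\in\Jclass(c_J)$. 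Given an arbitrary ball $B=B(x,r)$, \refeq{BinB} gives $B=B(x,r)\subset\Bin(x,Lr)\subset B(x,Lr)=LB$, so condition~(i) holds with $\ga=L$; plugging this into the previous paragraph completes the proof.

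I do not expect a real obstacle here, since the substance is already contained in \reflem{John>ics} and Theorem~\ref{thm-cork-imp-stable}. The only points demanding a little care are taking $\be=1$ when invoking Theorem~\ref{thm-cork-imp-stable}, and, in the ``in particular'' part, correctly recording that the inner balls are $(1/L)$-John and getting the directions of the inclusions in~\refeq{BinB} right.
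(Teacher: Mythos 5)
Your proposal is correct and follows the same route as the paper: verify the corkscrew hypothesis via Lemma~\ref{lem:John>ics} (with $\be=1$), then invoke Theorem~\ref{thm-cork-imp-stable}; and for the ``in particular'' clause, observe that inner balls are $(1/L)$-John and use \refeq{BinB} to verify Definition~\ref{def-stable}\,\ref{c-1} with $\ga=L$. The paper packages the latter observation in Remark~\ref{rem-john}, appealing to the fact that $X$ is geodesic in the inner metric, whereas you give a self-contained near-geodesic argument --- a cosmetic difference. One small inaccuracy: $\Bin(x,R)$ is bounded simply because $\din\ge d$ gives $\Bin(x,R)\subset B(x,R)$; the unboundedness of $X$ is only needed to ensure $\Bin(x,R)\ne X$, not to ensure boundedness.
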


\begin{proof}
This follows directly from Lemma~\ref{lem:John>ics} and
Theorem~\ref{thm-cork-imp-stable}, together with the following remark.
\end{proof}

\begin{rem}     \label{rem-john}
It is easy to see that if $X$ is a geodesic space, 
then $B(x,r)$ is a $1$-John domain with John center $x$.
On the other hand, the counterexample
in \refthm{nodicho} is not geodesic 
with respect to $d$ and an ordinary ball $B(x,r)$ need not be a John domain.

Since $X$ is always geodesic with respect to $\din$, 
it follows that $\Bin(x,r)$ is a $1$-John domain with respect to $\din$,
and hence a ${(1/L)}$-John domain with respect to $d$.
This, in particular, means that for $c_J\le1/L$ the family $\Jclass(c_J)$
is nonempty. 
Furthermore, as $X$ is quasiconvex, the family $\Jclass(c_J)$
satisfies Definition~\ref{def-stable}\,\ref{c-1}.
\end{rem}

The following example shows that there are unbounded
metric spaces with no $c_J$-John domains of large diameter,
when $c_J$ is close to $1$.
Thus, $\Jclass(c_J)$ is not always capacitarily stable
since Definition~\ref{def-stable}\,\ref{c-1} is violated in such situations.

\begin{example}
Consider
\[
X=\bigl\{(x,y)\in\R^2 : |y-\cos x| \le \tfrac12\bigr\},
\]
equipped with the Euclidean metric and the $2$-dimensional Lebesgue 
measure, see Figure~\ref{fig:NoJohn}. 
Since
$X$ is biLipschitz equivalent to $[0,1]\times\R$, 
it follows that the measure on $X$ is doubling
and supports a $1$-Poincar\'e inequality.

\begin{figure}[htb]
\footnotesize
\begin{overpic}[scale=.9]
{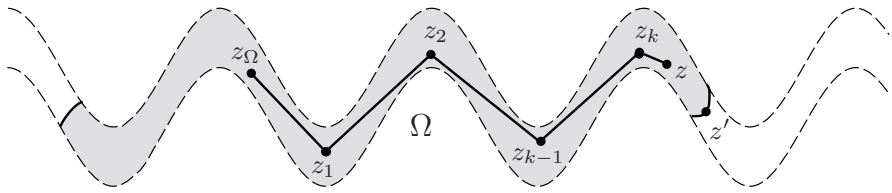}
\put(235,24){\normalsize$\Om$}
\put(168,53){$z_\Om$}
\put(198,10){$z_1$}
\put(240,61){$z_2$}
\put(273,15){$z_{k-1}$}
\put(319,61){$z_k$}
\put(334,46){$z$}
\put(347,23){$z'$}
\end{overpic}
\caption{No $c_J$-John domains of large diameter if $c_J>\pi/\sqrt{1+\pi^2}$.}
\label{fig:NoJohn}
\end{figure}

Let $\Om\subset X$ be a bounded domain with $\diam(\Om)>2\pi+3$ 
and let $z_\Om=(x_0,y_0)\in\Om$ act as a John center.
By translation of $\Om$, we can assume that $|x_0| \le \pi$.
Find $z'=(x',y')\in\bdry\Om$ so that $|x'|$ is as large as possible.
Because of the symmetry of $X$ and as
$\diam(\Om)>2\pi+3$, we can assume that 
$x'\in (k\pi,(k+1)\pi]$ for some integer $k\ge1$ since 
$\diam([-\pi,\pi]\times[-\frac32,\frac32])<2\pi+3$.

A simple geometric argument then shows that
$\de_\Om(z_\Om)\le |z_\Om|+|z'|\le (k+2)\pi+3$
and that any curve $\ga$ in $\Om$, which connects $z_\Om$ with a point 
$z=(x,y)\in\Om$, where 
$x > k \pi$, 
intersects vertical lines of $x$-coordinate $j\pi$, and hence
contains points $z_j=(j\pi,y_j)$ with $|y_j-\cos j\pi|\le\frac12$
for $j=1,\dots,k$.
Since $|y_j-y_{j+1}|\ge1$, we conclude that
\[
\ell(\ga)\ge \sum_{j=1}^{k-1} | z_j-z_{j+1}|
      \ge (k-1)\sqrt{1+\pi^2}.
\]
Since
\[
\frac{\de_\Om(z_\Om)}{\ell(\ga)} 
\le \frac{(k+2)\pi+3}{(k-1)\sqrt{1+\pi^2}} 
\to \frac{\pi}{\sqrt{1+\pi^2}} <1, \quad \text{as }k\to\infty,
\]
we see that for every $c_J> \pi/\sqrt{1+\pi^2}$, there exists $r>0$ such that
there are no $c_J$-John domains in $X$ with diameter at least $r$.
\end{example}

\end{document}